%%%%%%%%%%%%%%%%%%%%%%%%%%%%%%%
% Revision 19.10.2016
%%%%%%%%%%%%%%%%%%%%%%%%%%%%%%%
%
\documentclass[twoside,reqno]{amsart}
\usepackage{amsmath}

\usepackage{amsfonts}
\usepackage{amsthm}
\usepackage{amssymb}
\usepackage{graphicx,psfrag,epsfig}
\usepackage{color}
\usepackage{mathrsfs}
\usepackage{pdfsync}
\usepackage{pst-all}
\usepackage{cite}
\usepackage{times}

\oddsidemargin 0cm 
\evensidemargin 0cm 
\topmargin -0.5cm
\textwidth 15cm 
\textheight 21cm

\DeclareMathOperator*{\esssup}{ess-sup}

\newcommand{\R}{\mathbb{R}}

\newcommand{\N}{\mathbb{N}}

\newcommand{\rd}{\mathrm{d}}

\newcommand{\bqn}{\begin{equation}}
\newcommand{\eqn}{\end{equation}}
\newcommand{\bqnn}{\begin{equation*}}
\newcommand{\eqnn}{\end{equation*}}
\newcommand{\bear}{\begin{eqnarray}}
\newcommand{\eear}{\end{eqnarray}}
\newcommand{\bean}{\begin{eqnarray*}}
\newcommand{\eean}{\end{eqnarray*}}
\newcommand{\esxyp}{\end{split}}
\newcommand{\bsp}{\begin{split}}
\newcommand{\balg}{\begin{align}}
\newcommand{\ealg}{\end{align}}
\newcommand{\balgg}{\begin{align*}}
\newcommand{\ealgg}{\end{align*}}

%

%
% ENVIRONNEMENTS/FRAMES
%
\newtheorem{theorem}{Theorem}[section]
\newtheorem{corollary}[theorem]{Corollary}
\newtheorem{lemma}[theorem]{Lemma}
\newtheorem{proposition}[theorem]{Proposition}
\newtheorem{definition}[theorem]{Definition}

\numberwithin{equation}{section}
\begin{document}
\title{ Existence of Global Classical and Weak Solutions to \\ a Prion Equation with Polymer Joining}

%\title{Global Classical and Weak Solutions to \\ a Prion Equation with Polymer Joining}
%\thanks{}
\author{Elena Leis}
\address{Leibniz Universit\"at Hannover, Institut f\" ur Angewandte Mathematik, Welfengarten 1, D--30167 Hannover, Germany} 
\email{leis@ifam.uni-hannover.de}

\author{Christoph Walker}
\address{Leibniz Universit\"at Hannover, Institut f\" ur Angewandte Mathematik, Welfengarten 1, D--30167 Hannover, Germany} 
\email{walker@ifam.uni-hannover.de}
\keywords{Prions, polymer joining, classical and weak solutions, evolution operators.}
\subjclass{}
\date{\today}
%
%%%%%%%%%%%%%%%%%%%%%%%%%%%%%%%
\begin{abstract}
We consider a nonlinear integro-differential equation for prion proliferation that includes prion polymerization, polymer splitting, and polymer joining. The equation can be written as a quasilinear Cauchy problem. For bounded reaction rates we prove global existence  and uniqueness of classical solutions by means of evolution operator theory. We also prove global existence of  weak solutions for unbounded reaction rates by a compactness argument. 
\end{abstract}
%%%%%%%%%%%%%%%%%%%%%%%%%%%%%%%
%
\maketitle
\pagestyle{myheadings}
\markboth{\sc{E. Leis \& Ch. Walker}}{\sc{A Prion Equation with Polymer Joining}}
%
%
%%%%%%%%%%%%%%%%%%%%%%%%%%%%%%%
%%%%%%%%%%%%%%%%%%%%%%%%%%%%%%%
\section{Introduction} \label{sec:int}
%%%%%%%%%%%%%%%%%%%%%%%%%%%%%%%
%%%%%%%%%%%%%%%%%%%%%%%%%%%%%%%

Prions are misfolded proteins and are regarded as the infectious agent
of fatal diseases known as TSE's including BSE of cattle,
new variant Creutzfeldt-Jakob of human, and Scrapie of sheep.
Prions seem to be capable of proliferation despite  lacking DNA  and RNA. In this article we focus on  a mathematical model introduced in \cite{GvDWW07} for  {\it nucleated polymerization}
which is a theory describing the replication of prions. According to this theory,
infectious $PrP^{Sc}$ prions are thought to be a polymer form  of a normal protein monomer $PrP^{C}$. Infectious polymers 
build bonds involving several thousands of monomer units by
attaching non-infectious $PrP^{C}$ monomers and converting them to
the infectious form. Prions are very stable but can also
split into smaller polymers. Usually, this produces again two
infectious $PrP^{Sc}$ polymers. However, decay products below a critical size $y_0>0$ are assumed to disintegrate instantaneously  into $PrP^{C}$ monomers. Moreover, two infectious polymers can also join and form longer polymers. We refer to
\cite{GvDWW07,GPW06,Nowak, Masel} and the references therein for
more detailed information on the biological background and on the mechanism of nucleated polymerization.

The biological processes of polymerization, polymer joining,
and polymer splitting can be described by a coupled system consisting of
an ordinary differential equation for the number of $PrP^{C}$
monomers $v(t)\ge 0$ and an integro-differential equation for the
density distribution function $u=u(t,y)\ge 0$ for $PrP^{Sc}$
polymers of size $y>y_0$. The monomer equation is
\begin{equation}
\label{eqv}
\begin{split}
v'(t) & = \lambda-\gamma v(t) - \frac{v(t)}{1+\nu \displaystyle\int_{y_0}^{\infty}  u(t,z) z \mathrm{d}z} \int_{y_0}^{\infty} \tau(y)  u(t,y) \, \mathrm{d}y 
\\ & \quad + 2\int_{y_0}^{\infty} u(t,y) \beta (y) \int_0^{y_0} z \kappa(z,y)  \, \mathrm{d}z \, \mathrm{d}y   
\end{split}
\end{equation}
and the polymer equation is 
\begin{equation}\label{equ}
\begin{split}
 \partial_tu(t,y) +  \frac{v(t)}{1+\nu \displaystyle\int_{y_0}^{\infty}   u(t,z) z \mathrm{d}z}\partial_y{\left(\tau(y) u(t,y) \right)}  =   L[u(t)](y) + Q[u(t),u(t)](y)
\end{split}
\end{equation}
for $t>0$ and $y \in  Y:=(y_0, \infty)$ involving a linear part $L$ with
\begin{equation*}%\label{L}
L[u](y) :=  - (\mu(y) + \beta(y))  u(y)  +  2 \int_{y}^{\infty}  \beta(z)  \kappa(y,z)  u(z)    \, \mathrm{d}z
\end{equation*}
and a bilinear part $Q$ with 
\begin{equation*}%\label{Q}
Q[u,w](y) := \mathbf{1}_{[y>2y_0]}\int_{y_0}^{y-y_0}\eta(y-z,z)u(y-z)w(z)\, \mathrm{d}z - 2u(y)\int_{y_0}^{\infty}  \eta(z,y)w(z) \, \mathrm{d}z \,.
\end{equation*}
The equations are supplemented with the boundary condition
\begin{equation}
\label{RB}
u(t,y_0) = 0 \ ,  \quad t>0  
\end{equation}
and the initial values
\begin{equation}
\label{AW}
v(0) = v^0 \ , \quad u(0,y) = u^0(y) \ , \quad y \in (y_0, \infty) \ .
\end{equation}      
According to the right-hand side of the ordinary differential equation \eqref{eqv}  the number of monomers is increased by a
constant background source $\lambda$ and if a $PrP^{Sc}$ polymer of
any size $y>y_0$ decays at a rate $\beta(y)$ into at least one
daughter polymer of size $z\le y_0$, which is assumed to
disintegrate instantaneously into monomers only. The probability (density)
for this event is denoted by $\kappa(z,y)$. The number of $PrP^{C}$ monomers decreases by metabolic
degradation with rate $\gamma$ and
if monomers are attached to a
$PrP^{Sc}$ polymer of size $y>y_0$ at rate $\tau(y)$. Accordingly, equation \eqref{equ} for $u$ involves a nonlinear polymerization term 
$$
\frac{v(t)}{1+\nu \displaystyle\int_{y_0}^{\infty}   u(t,z) z \mathrm{d}z}\partial_y(\tau(y) u(y))\,.
$$ 
If $\nu>0$ there is a saturation effect when the number $\int_{y_0}^{\infty}  u(t,z) z \mathrm{d}z$ of monomers within the infectious polymers becomes large resulting in less lengthening overall. The right-hand side of \eqref{equ} reflects that polymers
of size $y>y_0$  disappear  due to metabolic degradation with
rate $\mu(y)$, by splitting with rate $\beta(y)$, or if they join with another polymer. Also, polymers of size $y>y_0$ can be produced by the decay of
a larger polymer or if two smaller polymers join. Thus, equation \eqref{equ} is reminiscent of the continuous coagulation-fragmentation equation known from physics (see e.g. \cite{ELMP,L00} and the references therein).
\\

When polymer joining is neglected, that is, $\eta\equiv 0$, \eqref{eqv}-\eqref{AW} and variants thereof were investigated in \cite{EPW06, GPW06, PP-MWZ06, LW07, SW06, W}. More precisely, assuming that the kernels have the particular form
 \begin{equation}\label{5}
     \tau \equiv \text{const}\,,\quad  \mu \equiv \text{const}\,,\quad
\beta(y) =      \beta  y\,,\quad \kappa(z,y)  = \frac{1}{y}\,,
    \end{equation}
\eqref{eqv}-\eqref{equ} can be integrated and a closed system of ordinary differential equations for the unknowns $v$, $\int_Y u(t,y)  \mathrm{d} y$, and $\int_Y y
u(t,y)\mathrm{d} y$ can be obtained which possesses a unique global solution as shown in \cite{GPW06, PP-MWZ06} (for $\nu=0$). In these articles also stability of equilibria were studied. Note that in this case the solution $v$ to \eqref{eqv} is then determined and thus \eqref{eqv}-\eqref{equ} decouples leaving one with a non-local, but linear integro-differential equation for $u$ for which well-posedness and asymptotic stability of equilibria were shown in \cite{EPW06}. For $\eta\equiv 0$, well-posedness of global classical and weak solutions to the coupled system \eqref{eqv}-\eqref{AW} without assuming \eqref{5} was established in \cite{LW07, SW06, W}. 
Let us also point out that certain qualitative aspects of \eqref{eqv}-\eqref{equ} (still with $\eta\equiv 0$) were investigated e.g. in \cite{CLDDMP, CLODLMP, DG10, G15}.
The model with polymer joining was introduced in \cite{GvDWW07}. Assuming \eqref{5} and $\eta\equiv const$, equations \eqref{eqv}-\eqref{equ} can again be integrated to a  system of ordinary differential equations for which global well-posedness and stability of equilibria was studied in \cite{GvDWW07}.

The main contribution of this article is the inclusion of the bilinear polymer joining part $Q[u,u]$. We prove existence and uniqueness of global classical solutions as in \cite{SW06, W} and existence of global weak solutions  as in \cite{LW07}. Note that this does not seem to be straightforward since 
the linear part $L[u]$ can be considered as a perturbation of the first order polymerization term 
and thus, for $\eta\equiv 0$ (i.e. $Q\equiv 0$), equation \eqref{equ} is homogeneous and considerably simpler to handle, see \cite{SW06,W}. Including $Q$ requires additional arguments and the proofs -- in particular for classical solutions -- become more involved as we shall see later on (see the remarks at the end of Subsection~\ref{Sec3.1}).

%%%%%%%%%%%%%%%%%%%%%%%%%%%%%%%%%%%%%%%%%%%%%%%%%%%%%%%%
\section{Main Results}
%%%%%%%%%%%%%%%%%%%%%%%%%%%%%%%%%%%%%%%%%%%%%%%%%%%%%%%%

Throughout this article we assume that
\begin{equation}\label{lambda_gamma}
\nu, \lambda, \gamma \geq 0 \ .
\end{equation}
The splitting kernel $ \kappa \geq 0$ is a measurable function defined on $\mathcal{K}:=\{(z, y); y_0< y < \infty, 0 < z < y\}$ satisfying the symmetry condition
\begin{equation}
\kappa(z, y)=\kappa(y-z, y) \ , \quad (z, y) \in \mathcal{K} \ ,
\label{bin_split}
\end{equation}
and is normalized according to
\begin{equation}\label{mon_pres} 
2 \int_0^y z \kappa(z, y) \, \mathrm{d} z= y \ , \quad  \text{a.a. } y \in Y \ .
\end{equation}
Thus, splitting conserves the number of monomers and \eqref{bin_split}, \eqref{mon_pres} imply  
\begin{equation}\label{int_kappa}
\int_0^y \kappa (z, y) \, \mathrm{d} z= 1 \ , \quad  \text{a.a. } y \in Y \ .
\end{equation}
The polymer joining kernel $\eta$ is symmetric, that is,
\begin{equation}\label{etasym}
\eta(y,z) = \eta(z,y) \,,\quad  y,z \in Y\,.
\end{equation}
We then remark that  \eqref{etasym} (formally) implies  the identities
\begin{equation}
\label{L_Test}
\begin{split}
\int_{y_0}^{\infty}   \varphi(y) L[u](y) \, \mathrm{d} y  = &  - \int_{y_0}^{\infty}  \varphi(y)  \mu(y)  u(y) \, \mathrm{d} y 
\\ &  +  \int_{y_0}^{\infty}  u(y)  \beta(y)  \left(   -\varphi(y)  +  2 \int_{y_0}^{y}  \varphi(z)  \kappa(z,y)   \, \mathrm{d} z  \right)   \, \mathrm{d} y \ 
\end{split} 
\end{equation}
and  
\begin{equation}\label{tilde}
\int_{y_0}^{\infty} \varphi(y)  Q[u,u](y)  \, \mathrm{d}y  = \int_{y_0}^{\infty}  \int_{y_0}^{\infty}  (\varphi(y+z) - \varphi(y) - \varphi(z))  \eta(y,z)  u(y)u(z)  \, \mathrm{d}z  \, \mathrm{d}y \,.
\end{equation}
In particular, with $\varphi(y)=y$ we obtain from \eqref{mon_pres} that a solution $(v,u)$ to \eqref{eqv}-\eqref{AW} satisfies (formally) the monomer balance law
\begin{equation}\label{monomererhaltend}
\begin{split}
v(t) + &\int_{y_0}^{\infty}y u(t,y)\mathrm{d}y- v^0 - \int_{y_0}^{\infty}y u^0(y)\mathrm{d}y \\ 
&= \lambda t - \gamma \int_{0}^{t}v(s)\mathrm{d}s - \int_{0}^{t}\int_{y_0}^{\infty}y \mu(y) u(s,y)\mathrm{d}y\mathrm{d}s
\end{split}
\end{equation}
at time $t$. Thus, the number of monomers only changes due to natural production or metabolic degradation. This relation turns out to be crucial with respect to the existence of global  solutions as it provides suitable a priori estimates.  This, however, seems to be the only available  information.

In the following we use  $L_1(Y,y\mathrm{d}
y)$ as a state space for the population density $u$ and denote its positive cone by $L_1^+(Y,y\mathrm{d}y)$. This allows us to keep track of the  biologically important
quantities 
$$
\int_{y_0}^{\infty}  u(t,y)  \mathrm{d}y\qquad\text{and}\qquad \int_{y_0}^{\infty}  u(t,y) y \mathrm{d}y
$$
of all $PrP^{Sc}$ polymers respectively $PrP^{C}$ monomers forming those polymers.

%%%%%%%%%%%%%%%%%%%%%%%%%%%%%%%%%%%%%%%%%%%%%%%%%%%%%%%%
\subsection{Classical Solutions for Bounded Kernels}
%%%%%%%%%%%%%%%%%%%%%%%%%%%%%%%%%%%%%%%%%%%%%%%%%%%%%%%%

We consider first bounded kernels  $\mu$, $\beta$, $\eta$, and $\tau$. More precisely, we  let 
\bqn\label{boundedkernels}
\mu, \beta\in L_\infty^+(Y)\,,\quad \eta \in BC^1(Y \times Y, \mathbb{R}^+)
\eqn
and 
\bqn\label{ta}
\tau \in BC^1(Y, \mathbb{R}^+)\,,\qquad \tau (y)\ge  \tau_0\,,\quad y\in (y_0,\infty)\,,
\eqn
 for some constant $\tau_0>0$. The boundedness \eqref{boundedkernels} of the kernels in particular imply that the operators $L$ and $Q$ are bounded  and linear, respectively, bilinear operators from $L_1$ into itself. Using this we can proof the existence
and uniqueness of global classical solutions:

\begin{theorem}\label{T1}
Suppose \eqref{lambda_gamma}-\eqref{mon_pres},  \eqref{etasym}, \eqref{boundedkernels}, and \eqref{ta}. 
Then, given any initial values $v^0>0$ and $u^0\in L_1^+(Y,y\mathrm{d}
y)$ with
$\partial_yu^0\in L_1(Y,y\mathrm{d}
y)$ and $u^0(y_0)=0$, there exists a
unique global classical solution $(v,u)$ to \eqref{eqv}-\eqref{AW}
such that $v\in C^1(\mathbb{R}^+)$ and $u\in
C^1(\mathbb{R}^+,L_1(Y,y\mathrm{d}
y))$ with $\partial_yu\in C(\mathbb{R}^+,L_1(Y,y\mathrm{d}
y))$. This solution is positive, that is, $v(t)>0$, $u(t)\in L_1^+(Y,y\mathrm{d}y)$ for $t\ge 0$, and it is monomer preserving, that is, it satisfies the balance law \eqref{monomererhaltend}.
\end{theorem}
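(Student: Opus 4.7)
The plan is to recast \eqref{eqv}--\eqref{equ} as a coupled system consisting of an ordinary differential equation for $v$ and a quasilinear transport equation for $u$ on the state space $X := L_1(Y, y\,\mathrm{d}y)$, and to solve it by a contraction mapping argument built around the evolution system generated by the first-order part. For candidate data $(\tilde v, \tilde u)$, set
$$
a(t) := \frac{\tilde v(t)}{1 + \nu \int_{y_0}^\infty z\,\tilde u(t,z)\,\mathrm{d}z} \ge 0
$$
and consider the family $A(t) w := -a(t)\,\partial_y(\tau w)$ on $X$ with domain $\{w \in X : \partial_y(\tau w) \in X,\ w(y_0) = 0\}$. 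Because $\tau \in BC^1(Y, \R^+)$ with $\tau \ge \tau_0 > 0$ and $a$ is non-negative and Lipschitz in $t$, Kato-type theorems for hyperbolic evolution equations furnish a positivity-preserving, contractive evolution system $U_{\tilde v,\tilde u}(t,s)$ on $X$.

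For local existence I would run a Banach fixed-point on a small interval $[0,T]$: the ODE \eqref{eqv} is solved explicitly given $\tilde u$ to produce $v = v[\tilde u]$, while the polymer equation is treated in mild form
$$
u(t) = U_{\tilde v,\tilde u}(t,0)u^0 + \int_0^t U_{\tilde v,\tilde u}(t,s)\bigl(L[\tilde u(s)] + Q[\tilde u(s),\tilde u(s)]\bigr)\,\mathrm{d}s.
$$
By \eqref{boundedkernels} the operator $L$ is bounded linear and $Q$ is bounded bilinear on $X$, so the contraction estimate reduces to (i) a Lipschitz estimate for $U_{\tilde v,\tilde u}(t,s)$ with respect to $a$ and (ii) the bilinear bound $\|Q[u,u] - Q[\tilde u,\tilde u]\|_X \le C(\|u\|_X + \|\tilde u\|_X)\|u - \tilde u\|_X$. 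Positivity of $u$ then follows from $U_{\tilde v,\tilde u}(t,s)$ preserving positivity and from splitting the source into gain and (bounded-coefficient) loss parts, while $v(t) > 0$ is immediate from the ODE. Higher regularity $\partial_y u \in C([0,T^+),X)$ is obtained by differentiating in $y$ and repeating the argument, exploiting $\partial_y u^0 \in X$, $\tau \in BC^1$, and $\eta \in BC^1$.

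Global existence hinges on the monomer balance law \eqref{monomererhaltend}, itself derived by testing \eqref{equ} against $\varphi(y) = y$ and invoking \eqref{mon_pres} to eliminate the splitting contribution from $L$, together with \eqref{tilde} and the identity $(y+z) - y - z = 0$ to eliminate the joining contribution from $Q$. This yields an a priori linear-in-$t$ bound on $v(t)$ and on $\|u(t)\|_X = \int_{y_0}^\infty y\,u(t,y)\,\mathrm{d}y$, which under the boundedness of $\mu,\beta,\eta,\tau$ precludes blow-up and rules out a finite maximal time $T^+$ via the usual continuation criterion.

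The main obstacle is coordinating the quasilinear transport with the bilinear perturbation $Q$ inside a single fixed-point scheme: unlike the $\eta \equiv 0$ case of \cite{SW06,W}, where $L$ is a bounded perturbation of a linear transport, one must now balance the bilinear Lipschitz constant of $Q$ (which grows with $\|u\|_X$) against the contraction radius while simultaneously controlling $a$ through the same bound on $\int_{y_0}^\infty z\,u(t,z)\,\mathrm{d}z$.
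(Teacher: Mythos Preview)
Your architecture is broadly right, but the single fixed-point scheme you sketch does not close, for exactly the reason the paper isolates at the end of Subsection~\ref{Sec3.1}. For a hyperbolic evolution system there is no smoothing, so the Lipschitz dependence of $U_{\tilde v,\tilde u}(t,s)$ on the coefficient $a$ holds only as a map $E_1\to E_0$ (with $E_1=\{w\in X:\partial_y(\tau w)\in X,\ w(y_0)=0\}$), cf.~\eqref{Abschaetzung_in_L(E1,E0)}; there is no $E_0\to E_0$ estimate. Hence, when comparing mild solutions arising from $(\tilde v_1,\tilde u_1)$ and $(\tilde v_2,\tilde u_2)$, the term $[U_1(t,s)-U_2(t,s)]\bigl(L[\tilde u_1(s)]+Q[\tilde u_1(s),\tilde u_1(s)]\bigr)$ is only controlled by the $E_1$-norm of the source, which forces $\tilde u_1(s)\in E_1$. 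If you then set up the fixed point in $C([0,T],E_1)$, you lose the contraction, because no $E_1\to E_1$ Lipschitz bound in the coefficient is available. The paper resolves this by a \emph{two-layer} fixed point: for each frozen $\bar u$ (hence frozen $V_{\bar u}$) one solves the linear-in-$u$ problem $u'+\mathbb{A}_{V_{\bar u}}(t)u=Q[u,\hat u(t)]$ and obtains, via the crucial mapping property $Q:E_1\times E_0\to E_1$ of Lemma~\ref{LQ}(b), a classical solution $\Gamma(\bar u)\in C(J_T,E_1)\cap C^1(J_T,E_0)$ together with the uniform $E_1$-bound \eqref{bound}; only then is the outer contraction run in the $E_0$-based metric of $Z_T^\nu$, where \eqref{Abschaetzung_in_L(E1,E0)} combined with \eqref{bound} suffices. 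Your item (i) hides precisely this loss of regularity.

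Two smaller but related gaps. First, ``differentiating in $y$ and repeating'' yields at best $\partial_y u\in C([0,T],X)$, not $u\in C^1([0,T],X)$; in the paper the time-differentiability comes for free from the inner step, since with $V_{\bar u}$ fixed and $Q[\cdot,\hat u(t)]\in\mathcal{L}(E_1)$ depending continuously on $t$, the standard hyperbolic theory \cite[\S5]{Pazy} delivers a classical solution directly. Second, your continuation argument stops at the $E_0$-bound \eqref{v_0_u_0} from the balance law, but the blow-up criterion \eqref{tplus} involves $\|u(t)\|_{E_1}$; the paper closes this via \eqref{Abschaetzung_in_L(E1)}, the estimate $\|Q[u,u]\|_1\le c\,\|u\|_1\|u\|_0$, and Gronwall to reach \eqref{u1}.
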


To prove Theorem~\ref{T1} we shall write \eqref{eqv}-\eqref{AW} as a quasilinear hyperbolic Cauchy problem for $u$, where the nonlinear transport term generates an evolution operator in the phase space $L_1(Y,y\mathrm{d}y)$ and the linear part $L$ can be considered as a linear perturbation thereof. Owing to the bilinear operator $Q$ the Cauchy problem is,  in contrast to \cite{SW06, W}, no longer homogeneous. We shall see  in Section~\ref{Sec3} that the fixed point argument to solve this Cauchy problem thus becomes more involved and has to be performed twice (in different function spaces) to cope with the lacking regularization of the hyperbolic evolution operator and the nonlinearities stemming from polymer joining.

%%%%%%%%%%%%%%%%%%%%%%%%%%%%%%%%%%%%%%%%%%%%%%%%%%%%%%%%
\subsection{Weak Solutions for Unbounded Kernels} \label{Sec2.2}
%%%%%%%%%%%%%%%%%%%%%%%%%%%%%%%%%%%%%%%%%%%%%%%%%%%%%%%%

The assumptions \eqref{boundedkernels}, \eqref{ta} that the kernels are bounded seem to be rather strong from a biological point of view  since they exclude e.g. splitting rates as in \eqref{5}. In order to include unbounded kernels we   weaken the notion of a solution.\\

\begin{definition}\label{D1}
Given $v^0 >0$ and $u^0 \in L_1^+(Y,y\mathrm{d}y)$ we call a pair $(v,u)$ a \textit{(monomer preserving) global weak solution} to \eqref{eqv}-\eqref{AW} provided  the following conditions are satisfied:
\begin{itemize}
\item[\rm(i)] $v \in C^1(\mathbb{R}^+)$  is a non-negative solution to \eqref{eqv},
\item[\rm(ii)] $u \in L_{\infty,\mathrm{loc}} \big(\mathbb{R}^+, L_1^+ \big)$  is  a weak solution to \eqref{equ}, that is, it satisfies for all $t > 0$ 
\begin{align} \label{mu_beta_u}
&[(s,y) \mapsto \big( \mu(y) + \beta(y)\big) u(s,y)] \in L_1 \big( (0,t) \times Y \big) 
\\ \label{eta_u}
&[(s,y,z)\mapsto \eta(y,z)u(s,y)u(s,z)]\in L_1((0,t)\times Y\times Y)
\end{align}
and 
\begin{align*}
\int_{y_0}^{\infty}\varphi(y) & u(t,y)\mathrm{d}y - \int_{y_0}^{\infty}\varphi(y)u^0(y)\mathrm{d}y
\\ = \, & \int_{0}^{t} \frac{v(s)}{1 + \nu \| u(s)\|_{L_1(Y, y \mathrm{d}y)}} \int_{y_0}^{\infty}\varphi'(y) \tau(y) u(s,y) \mathrm{d}y \mathrm{d}s \\
&-\int_{0}^{t} \int_{y_0}^{\infty}   \varphi(y) \mu(y)  u(s,y)  \, \mathrm{d} y \mathrm{d}s\\
&
 + \int_{0}^{t} \int_{y_0}^{\infty}  u(s,y)  \beta(y)  \left(   -\varphi(y)  +  2 \int_{y_0}^{y}  \varphi(z)  \kappa(z,y)   \, \mathrm{d} z  \right)   \, \mathrm{d} y \mathrm{d}s
 \\
& + \int_{0}^{t}\int_{y_0}^{\infty}  \int_{y_0}^{\infty}  (\varphi(y+z) - \varphi(y) - \varphi(z))  \eta(y,z)  u(s,y)u(s,z)  \, \mathrm{d}z  \mathrm{d}y \mathrm{d}s  
\end{align*}
for any test function $\varphi \in W^1_\infty(Y)$,
\item[\rm(iii)] the balance law \eqref{monomererhaltend} holds.
\end{itemize} 
\end{definition}
 The weak formulation in (ii) above is obtained by testing \eqref{equ} against $\varphi$ and using  the identity (\ref{tilde}).
To prove the existence of a weak solution we do no longer need bounded kernels but rather impose certain growth conditions.
More precisely, we suppose that
\begin{equation}
\label{Bedingung_mu_beta}
\mu, \beta \in L^+_{\infty, \mathrm{loc}}(Y) 
\end{equation}
and
\begin{equation}
\label{tau_2}
\tau \in C([y_0, \infty)) \quad\text{with}\quad  \tau_0 \leq \tau(y) \leq \tau_*y \ , \quad y \geq y_0 \ , 
\end{equation}
for some constants $\tau_0, \tau_* >0$. The measurable function $\kappa$ is supposed to satisfy \eqref{bin_split},\eqref{mon_pres} and given any $R > y_0$ it holds that
 \begin{equation}
\label{Bedingung_esssup_E}
 \lim_{\delta \rightarrow 0} \sup_{E \subset (y_0, R) \atop |E|  \leq \delta} \esssup_{y \in (y_0,R)} \beta(y) \int_{y_0}^{y} \mathbf{1}_{E}(z)\kappa(z,y)\mathrm{d}z = 0 \, ,
\end{equation}
where $|E|$ denotes the Lebesgue measure of a measurable set $E\subset Y$. Condition~\eqref{Bedingung_esssup_E} is used later on to guarantee the uniform integrability of a sequence of approximative solutions. Furthermore, let there be $y_1 \in Y$and $\delta_1 > 0$ such that
\begin{equation}
\label{Bedingung_kappa}
\int_{y_1}^{y}\left( 1 -\dfrac{z}{y} \right )\kappa(z,y) \mathrm{d}z \geq \delta_1 \ , \quad y \geq 2y_1\ .  
\end{equation}
The polymer joining kernel $\eta$ shall be a continuous function $Y \times Y \rightarrow \mathbb{R}^+$  satisfying \eqref{etasym} and
\begin{equation}\label{Bedingung_eta}
\begin{split}
   \eta(y,z)  \leq K \big({ y^{\alpha}z^{\rho} + y^{\rho}z^{\alpha}}\big)\,\qquad (y,z) \in Y \times Y\,,
\end{split}
\end{equation}
for some constant $K \geq 1$ and a pair of numbers $(\alpha, \rho)$   with  
\begin{equation}\label{theta}
0  \leq \alpha \leq \rho \leq 1\,,\quad \theta := \alpha + \rho \in [0,2]\,,
\end{equation}
ensuring the integrability of $Q$. In case that $\theta \in (1,2]$  we additionally require that there are $B > 0$, $\zeta > \theta - 1 $, and $0 < a < 1$ such that
\begin{equation}\label{Bedingung_beta}  
\beta(y)  \geq { By^{\zeta}} \,,\qquad   2 \int_{y_0}^{y}   z \kappa(z,y)  \mathrm{d}  z   \leq  a y \, ,     \qquad y \in Y \,. 
\end{equation}
The   imposed conditions are similar as in \cite{SW06,W,LW07}. We remark that a class of examples for  $\kappa$ is obtained when of the form
\begin{equation*}%\label{k0_1}
\kappa (z, y) = \frac {1}{y} k_0 \left(  \frac{z}{y} \right) \ , \quad   y>y_0 \ , \quad  0<z<y \ ,
\end{equation*} 
with a non-negative integrable function $k_0$ defined on $(0,1)$ satisfying
\begin{equation*}%\label{k0_2}
k_0( y) = k_0(1- y) \ , \quad   y \in (0,1) \ , \quad \int_0^1 k_0(y) \, \mathrm{d} y = 1\,.
\end{equation*}
One then readily checks that the conditions \eqref{bin_split}, \eqref{mon_pres}, \eqref{Bedingung_esssup_E}, and \eqref{Bedingung_kappa} hold. In particular, for $k_0 \equiv 1$ one has
\begin{equation*}%\label{k0_3}
\kappa(z,y) = \frac{1}{y} \, , \quad y > y_0 \, , \quad 0 < z < y \, ,
\end{equation*}
as considered in \cite{EPW06, GvDWW07, GPW06}.

To state our result on existence of weak solutions we shall use the notation $L_{1,\mathrm{w}} (Y, y\mathrm{d}y)$ for the space $L_{1} (Y, y\mathrm{d}y)$ endowed with its weak topology.

\begin{theorem}\label{Tweak}
Suppose \eqref{lambda_gamma}-\eqref{mon_pres}, \eqref{etasym}, and \eqref{Bedingung_mu_beta}-\eqref{theta}. If $\theta=\alpha+\rho \in (1,2]$, then also suppose~\eqref{Bedingung_beta}.
Let $(v_0,u_0)$ with $v^0 > 0$ and  $u^0 \in L_1^+(Y,y\mathrm{d}y)$. Then there exists a monomer preserving global weak solution $(v,u)$  in the sense of Definition~\ref{D1} with  
$u \in C(\mathbb{R}^+, L_{1,\mathrm{w}}(Y, y\mathrm{d}y))$. 
\end{theorem}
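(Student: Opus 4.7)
The plan is to construct the weak solution as an $n\to\infty$ limit of classical solutions produced by Theorem~\ref{T1}, following the strategy of \cite{LW07} but with additional care for the bilinear joining term. First I would regularize the unbounded data by setting $\mu_n:=\mu\wedge n$, $\beta_n:=\beta\wedge n$, $\eta_n:=\eta\wedge n$, and $\tau_n(y):=\tau(y\wedge n)$, and approximate $u^0$ by a sequence $u_n^0\in L_1^+(Y,y\mathrm{d}y)$ with compact support, $u_n^0(y_0)=0$, $\partial_y u_n^0\in L_1(Y,y\mathrm{d}y)$, and $u_n^0\to u^0$ in $L_1(Y,y\mathrm{d}y)$. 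Theorem~\ref{T1} then yields a unique positive classical solution $(v_n,u_n)$ satisfying the balance law \eqref{monomererhaltend}.

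The a priori estimates come in two layers. The balance law immediately gives
\[
v_n(t)+\int_{y_0}^{\infty} y\,u_n(t,y)\,\mathrm{d}y \;\le\; v^0+\int_{y_0}^{\infty} y\,u^0(y)\,\mathrm{d}y+\lambda t
\]
uniformly in $n$, so the first moment is locally bounded. To produce uniform integrability of $\{y\,u_n(t,\cdot)\}_n$ in $L_1(Y,\mathrm{d}y)$, I would invoke the refined de la Vall\'ee--Poussin theorem to obtain a non-negative convex function $\Phi$ with $\Phi(r)/r\to\infty$ and $\int \Phi(y)u^0(y)\,\mathrm{d}y<\infty$, and propagate the bound $\int\Phi(y)u_n(t,y)\,\mathrm{d}y\le C(T)$ using the weak formulation together with \eqref{Bedingung_esssup_E}. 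When $\theta=\alpha+\rho\in(1,2]$ the bilinear joining term is not controlled by a linear moment; here I would separately test against $\varphi(y)=y^\theta$ in \eqref{tilde}, use the inequality $(y+z)^\theta-y^\theta-z^\theta\le c_\theta(y^\alpha z^\rho+y^\rho z^\alpha)$, and exploit the fragmentation sink provided by \eqref{Bedingung_beta}, which contributes a term of strength $(1-a)B y^{\zeta+\theta}$; since $\zeta>\theta-1$ it dominates the quadratic joining gain after Young's inequality, yielding via Gr\"onwall a uniform bound on $\sup_{t\in[0,T]}\int y^\theta u_n(t,y)\,\mathrm{d}y$.

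For compactness the weak formulation supplies the equicontinuity estimate $|\int\varphi\,(u_n(t)-u_n(s))\,\mathrm{d}y|\le C(\varphi,T)|t-s|$ for $\varphi\in W^1_\infty(Y)$, with $C(\varphi,T)$ controlled by $\|\varphi\|_{W^1_\infty}$ and the moment bounds above. Combined with the Dunford--Pettis criterion, whose requirements of no mass escape at infinity and no concentration on bounded subsets of $Y$ are ensured respectively by the superlinear moment bound and by \eqref{Bedingung_esssup_E} (used to propagate uniform integrability through the fragmentation gain), a version of the Arzel\`a--Ascoli theorem for the weak topology extracts a subsequence with $u_n\to u$ in $C(\mathbb{R}^+,L_{1,\mathrm{w}}(Y,y\mathrm{d}y))$; the uniform bounds on $v_n$ and $v_n'$ likewise provide $v_n\to v$ in $C^1_{\mathrm{loc}}(\mathbb{R}^+)$ along a further subsequence.

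The main obstacle lies in passing to the limit in the quadratic $Q$-term of the weak formulation. The integrand $(\varphi(y+z)-\varphi(y)-\varphi(z))\eta_n(y,z)u_n(s,y)u_n(s,z)$ is pointwise dominated by $3K\|\varphi\|_\infty(y^\alpha z^\rho+y^\rho z^\alpha)u_n(s,y)u_n(s,z)$, so the moment estimates from Step~2 provide equi-integrability on $(0,T)\times Y\times Y$; together with the weak convergence $u_n(s,\cdot)\otimes u_n(s,\cdot)\rightharpoonup u(s,\cdot)\otimes u(s,\cdot)$ in $L_1$ and the monotone limit $\eta_n\uparrow\eta$, this allows the passage to the limit. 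The linear fragmentation double integral is handled using \eqref{Bedingung_esssup_E}, the denominator $1+\nu\|u_n(s)\|_{L_1(Y,y\mathrm{d}y)}$ converges pointwise by weak convergence at each $s$, and the balance law \eqref{monomererhaltend} for $(v,u)$ is inherited by taking $n\to\infty$ in the identity satisfied by each $(v_n,u_n)$.
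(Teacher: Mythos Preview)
Your overall strategy matches the paper's, but there is a genuine gap in the moment estimate when $\theta\in(1,2]$. You propose to test against $y^\theta$ and close via Gr\"onwall to obtain $\sup_{t\in[0,T]}\int y^\theta u_n(t,y)\,\mathrm{d}y\le C(T)$. This cannot work as stated: the initial datum is only assumed to lie in $L_1^+(Y,y\,\mathrm{d}y)$, so $\int y^\theta u^0(y)\,\mathrm{d}y$ need not be finite, and although the compactly supported approximants $u_n^0$ have finite $\theta$-moment, these moments are not uniformly bounded in $n$. A Gr\"onwall argument merely propagates the initial moment and therefore gives no uniform-in-$n$ control. The paper avoids this by testing against $y^2$: the fragmentation sink from \eqref{Bedingung_beta} contributes $(1-a)B\,M_{2+\zeta,n}$, which after H\"older and Young interpolation against $M_{1,n}$ yields the \emph{superlinear} differential inequality
\[
\frac{\mathrm{d}M_{2,n}}{\mathrm{d}t}+c_1(T)\,M_{2,n}^{1+\zeta}\le c(T),
\]
whose solutions are bounded by $C_M(T)(1+t^{-1/\zeta})$ \emph{independently of} $M_{2,n}(0)$. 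Only then does one recover the integrated bound $\int_0^T M_{\theta,n}(t)\,\mathrm{d}t\le c(T)$ (by interpolating $M_{\theta,n}$ between $M_{1,n}$ and $M_{2,n}$ and using $\zeta>\theta-1$), and it is this \emph{time-integrated} moment, not a pointwise one, that enters the Gr\"onwall step for the $\Phi$-moment.

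A second, related issue is your truncation $\beta_n:=\beta\wedge n$: it destroys the lower bound $\beta_n(y)\ge B y^\zeta$ for large $y$, so the fragmentation-sink mechanism above is unavailable. The paper instead truncates by an indicator, $\beta_n:=\mathbf{1}_{[y_0,\mathcal{S}_n(T)]}\beta$, with $\mathcal{S}_n(T)$ chosen (via a finite-speed-of-propagation lemma that requires $\eta_n(y,z)=0$ for $y+z$ large) so that $\mathrm{supp}\,u_n(t)\subset[y_0,\mathcal{S}_n(T)]$; hence $\beta_n=\beta$ wherever $u_n\ne0$ and \eqref{Bedingung_beta} is retained in the moment computation. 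Your choice $\eta_n:=\eta\wedge n$ does not vanish at infinity and so does not feed into such a support argument.
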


The construction of a monomer preserving global weak solution results from a compactness argument. For suitably truncated bounded kernels we  first obtain from Theorem~\ref{T1} a sequence  $((v_n,u_n))_{n\in\N}$ of global classical solutions. We then use the balance law \eqref{monomererhaltend} and the Dunford-Pettis Theorem to derive compactness of this sequence in the space $C\big([0,T], \mathbb{R} \times L_{1,\mathrm{w}}(Y, y \mathrm{d}y) \big)$ for any given $T>0$. Finally, we show that any cluster point of the sequence  $((v_n,u_n))_{n\in\N}$ represents a monomer preserving global weak solution.

The previous compactness argument providing the existence of weak solutions does obviously not lead to uniqueness of such a solution. However, one can give sufficient conditions for uniqueness of weak solutions and thus obtain a well-posedness result for weak solutions  \cite{LW2}. This requires additional integrability properties of weak solutions as stated in the following result:

\begin{proposition}\label{C1}
Let  the assumptions of Theorem~\ref{Tweak} with $\theta=\alpha+\rho\le 1$ hold. If  $u^0 \in L_1^+(Y,y^{\sigma}\mathrm{d}y)$ for some $\sigma  \ge  1$, then  $u \in L_{\infty,\mathrm{loc}}\big( \mathbb{R}^+, L_1(Y, y^\sigma \mathrm{d}y) \big)$. 
\end{proposition}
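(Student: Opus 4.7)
The plan is to propagate the $\sigma$-th moment $M_\sigma^{(n)}(t) := \int_{y_0}^{\infty} y^\sigma u_n(t,y)\,\mathrm{d}y$ along the sequence $(v_n,u_n)$ of classical solutions with truncated bounded kernels constructed in the proof of Theorem~\ref{Tweak}, derive a bound uniform in $n$ on every interval $[0,T]$, and then deduce the claim for $u$ via Fatou's lemma and the weak $L_1(Y, y\,\mathrm{d}y)$-convergence $u_n(t) \rightharpoonup u(t)$. The case $\sigma = 1$ is immediate from the balance law \eqref{monomererhaltend}, so I assume $\sigma > 1$. Since $y^\sigma \notin W^1_\infty(Y)$, I would work with the admissible test function $\varphi_R(y) := \min\{y^\sigma, R^\sigma\}$ in the weak formulation from Definition~\ref{D1}, obtain a differential inequality for $\int \varphi_R u_n \,\mathrm{d}y$, and finally let $R \to \infty$ by monotone convergence together with the sign information established below.

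Next I would estimate the four contributions to the time derivative separately. The transport term is bounded by $\sigma \tau_* v_n(t)\, M_\sigma^{(n)}(t)$ using $\tau(y) \le \tau_* y$, and the degradation term is non-positive. For the fragmentation contribution the key elementary observation is that, for $\sigma \ge 1$ and $0 < z < y$, one has $z^\sigma \le z\,y^{\sigma-1}$, so by \eqref{mon_pres}
\begin{equation*}
-y^\sigma + 2 \int_{y_0}^{y} z^\sigma \kappa(z,y)\,\mathrm{d}z \le -y^\sigma + y^{\sigma-1}\cdot 2 \int_{0}^{y} z\,\kappa(z,y)\,\mathrm{d}z = 0,
\end{equation*}
hence this term contributes non-positively. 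For the polymer-joining term I would combine the convexity bound $(y+z)^\sigma - y^\sigma - z^\sigma \le \sigma\, 2^{\sigma-1}\min\{y,z\}(y+z)^{\sigma-1}$ with \eqref{Bedingung_eta} and the elementary inequality $y^\alpha z^\rho + y^\rho z^\alpha \le 2\max\{y,z\}^\theta$ to arrive at an upper bound of the form $C\, M_1^{(n)}\, M_{\sigma-1+\theta}^{(n)}$.

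The decisive step is then closing the estimate using $\theta \le 1$. Since $y_0 > 0$, any moment of order $p \le 1$ is controlled by a power of $y_0$ times $M_1^{(n)}$; for $p \in [1,\sigma]$, log-convexity of moments (H\"older with respect to the measure $u_n\,\mathrm{d}y$) gives $M_p^{(n)} \le (M_1^{(n)})^{(\sigma-p)/(\sigma-1)}(M_\sigma^{(n)})^{(p-1)/(\sigma-1)}$. Applied with $p = \sigma-1+\theta \in [\sigma-1,\sigma]$, the exponent on $M_\sigma^{(n)}$ equals $(\sigma-2+\theta)/(\sigma-1)$, which does not exceed $1$ precisely when $\theta \le 1$. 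Hence the joining contribution is controlled by $C(T)\bigl(1 + M_\sigma^{(n)}(t)\bigr)$ after using the uniform-in-$n$ bound on $v_n$ and on $M_1^{(n)}$ provided by \eqref{monomererhaltend}. Gronwall's lemma then yields $M_\sigma^{(n)}(t) \le C(T)$ on $[0,T]$ uniformly in $n$, and passing to the limit concludes the proof. The main obstacle is exactly this interpolation closure: for $\theta > 1$ the resulting differential inequality becomes super-linear in $M_\sigma^{(n)}$ and no a priori bound is available by this method, which is why the hypothesis $\theta \le 1$ is indispensable. A minor technical point is the rigorous passage $R \to \infty$, which is handled by monotone convergence for the mass and non-positive fragmentation terms and by Fatou for the non-negative joining contribution.
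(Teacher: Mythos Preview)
Your proposal is correct and follows the same overall strategy as the paper: derive a uniform-in-$n$ bound on $M_\sigma^{(n)}(t)$ for the approximating classical solutions $u_n$ via a differential inequality and Gronwall, then pass to the limit. Two places where the paper is more direct are worth noting.

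First, the truncation $\varphi_R$ is unnecessary: by construction (finite speed of propagation, see \eqref{supp_u_n}) each $u_n(t,\cdot)$ is compactly supported in $[y_0,\mathcal{S}_n(T)]$, so one may test the classical equation directly with $y^\sigma$. This also sidesteps a small wrinkle in your outline: the quantity $\varphi_R(y+z)-\varphi_R(y)-\varphi_R(z)$ is not in general non-negative, so your ``Fatou for the non-negative joining contribution'' is not literally correct as stated; with compact support the issue simply disappears.

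Second, for the joining term the paper closes the estimate in one pointwise step rather than via interpolation. For $y_0\le z\le y$ one has $(y+z)^\sigma-y^\sigma-z^\sigma\le c(\sigma)\,y^{\sigma-1}z$ and, from \eqref{Bedingung_eta_n}, $\eta_n(y,z)\le 2K\,y^{\theta}$; multiplying gives $c\,y^{\sigma-1+\theta}z\le c\,y_0^{\theta-1}y^\sigma z$ since $\theta\le 1$, and integrating yields directly $c(T)\,M_1^{(n)}M_\sigma^{(n)}\le c(T)\,M_\sigma^{(n)}$. Your route via $M_{\sigma-1+\theta}^{(n)}$ and log-convexity of moments is valid and makes the role of the threshold $\theta\le 1$ conceptually transparent (the exponent on $M_\sigma^{(n)}$ is $\le 1$ exactly when $\theta\le 1$), but it is more elaborate than needed here.
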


%%%%%%%%%%%%%%%%%%%%%%%%%%%%%%%%%%%%%%%%%%%%%%%%%%%%%%%%
\section{Proof of Theorem~\ref{T1}}\label{Sec3}
%%%%%%%%%%%%%%%%%%%%%%%%%%%%%%%%%%%%%%%%%%%%%%%%%%%%%%%%

This section is dedicated to the existence and uniqueness of global classical solutions for bounded kernels for which we invoke the theory of evolution operators. Throughout we suppose the assumptions stated in Theorem~\ref{T1}.  

\subsection{Preliminaries}\label{Sec3.1}

The boundedness \eqref{boundedkernels} of the kernels  implies that the operators $L$ and $Q$ are bounded  and linear, respectively, bilinear operators from $L_1(Y, y \mathrm{d}y)$ into itself. More precisely, putting
$$ 
E_0:=L_1(Y, y\mathrm{d}y)
$$
equipped with the norm $\|\cdot\|_{0}:=\|\cdot\|_{L_1(Y, y\mathrm{d}y)}$, denoting its positive cone by $E_0^+$, and setting
$$
E_1 := \{u  \in E_0 : \partial_y(\tau u) \in E_0, u(y_0) = 0\} 
$$
equipped with the norm (see \eqref{tau_2})
$$
\|u\|_{1} := \|u\|_{0} + \|\partial_y (\tau u)\|_{0}\,, \ u\in E_1\,,
$$
we readily obtain:

\begin{lemma} \label{LQ}
(a) The operator $L: E_0\rightarrow E_0$ is bounded and linear with
$$
\|L[u]\|_{0} \leq c_*\, \big(\|\mu\|_\infty+ \|\beta\|_\infty\big)\,\|u\|_{0}  \ , \quad u \in  E_0 \,.
$$
(b) For $j\in\{0,1\}$ the operator $Q : E_j \times E_0 \to E_j$  is bounded and bilinear with
$$
\|Q[u, w]\|_{j} \leq  c_*\,\|\eta\|_\infty\, \|u\|_{j}\, \|w\|_{0} \,,  \quad u \in  E_j \,,\ w \in E_0  \, .
$$
\end{lemma}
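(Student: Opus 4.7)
The plan is to establish the three norm estimates; linearity of $L$ and bilinearity of $Q$ are immediate from the defining integral formulas.

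For part~(a) I would split $L[u]$ into the loss piece $-(\mu+\beta)u$ and the fragmentation gain $2\int_y^\infty\beta(z)\kappa(y,z)u(z)\,\mathrm{d}z$. The first piece is controlled by $(\|\mu\|_\infty+\|\beta\|_\infty)\|u\|_0$ directly. For the second I would swap the order of integration via Fubini to obtain $\int_{y_0}^\infty \beta(z)|u(z)|\bigl(2\int_{y_0}^{z} y\,\kappa(y,z)\,\mathrm{d}y\bigr)\mathrm{d}z$, and then invoke the normalization~\eqref{mon_pres} to bound the inner bracket by $z$, which returns $\|\beta\|_\infty\|u\|_0$ and closes part~(a).

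For part~(b) with $j=0$, the loss term $2u(y)\int_{y_0}^\infty\eta(z,y)w(z)\,\mathrm{d}z$ is controlled in $E_0$ by $2\|\eta\|_\infty\|u\|_0\|w\|_{L_1(Y)}$, after which $\|w\|_{L_1(Y)}\le y_0^{-1}\|w\|_0$ is absorbed into the constant. The gain term is handled by the change of variables $x=y-z$, which turns the inner integral into one over $\{x,z>y_0\}$ with weight $(x+z)\,\eta(x,z)\,|u(x)|\,|w(z)|$; bounding $\eta$ by $\|\eta\|_\infty$ and expanding $(x+z)$ yields, once more via $\|\cdot\|_{L_1(Y)}\le y_0^{-1}\|\cdot\|_0$, the desired $c_*\|\eta\|_\infty\|u\|_0\|w\|_0$.

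The main obstacle is part~(b) with $j=1$, where I need $\partial_y(\tau\, Q[u,w])\in E_0$ while only $\partial_y(\tau u)\in E_0$ (not $\partial_y u$) is available. My plan is to pass to $V:=\tau u$, which lies in the appropriate weighted Sobolev space and satisfies $V(y_0)=0$, and to rewrite the gain piece of $\tau(y)\,Q[u,w](y)$ (for $y>2y_0$) as
\[
\int_{y_0}^{y-y_0}\frac{\tau(y)\,\eta(y-z,z)}{\tau(y-z)}\,V(y-z)\,w(z)\,\mathrm{d}z.
\]
Differentiating in $y$ under the integral, Leibniz produces a boundary contribution at $z=y-y_0$ that vanishes thanks to $V(y_0)=0$, plus two interior terms in which either $V(y-z)$ is paired with a $y$-derivative of the kernel $\tau(y)\eta(y-z,z)/\tau(y-z)$ (uniformly bounded thanks to $\eta\in BC^1$, $\tau\in BC^1$ and $\tau\ge\tau_0>0$) or $V'(y-z)$ is paired with the kernel itself. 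Each piece is then estimated by the same Fubini/substitution argument used in the $j=0$ case, combined with $\|V\|_0\le\|\tau\|_\infty\|u\|_0$ and $\|V'\|_0\le\|u\|_1$. For the loss piece I would use $\partial_y(\tau u\,H)=H\,\partial_y(\tau u)+(\tau u)\,H'$ with $H(y):=\int_{y_0}^\infty\eta(z,y)w(z)\,\mathrm{d}z$, noting that both $\|H\|_\infty$ and $\|H'\|_\infty$ are bounded by $\|w\|_{L_1(Y)}$ times a constant depending on $\|\eta\|_{BC^1}$. Adding the already-proved $\|Q[u,w]\|_0$ bound and absorbing all kernel-dependent constants (in particular the $BC^1$ norms of $\tau$ and $\eta$) into $c_*$ yields the claimed estimate.
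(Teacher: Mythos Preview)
Your proposal is correct. The paper does not actually give a proof of this lemma; it is stated immediately after the sentence ``we readily obtain:'' with no argument supplied, so there is nothing to compare against at the level of strategy. Your treatment of part~(a) via Fubini and the normalization~\eqref{mon_pres}, and of part~(b) for $j=0$ via the substitution $x=y-z$ together with $\|\cdot\|_{L_1(Y)}\le y_0^{-1}\|\cdot\|_0$, is exactly the natural route.

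One remark on the $j=1$ case: your approach necessarily uses the $BC^1$ regularity of $\eta$ (and of $\tau$, with the lower bound $\tau\ge\tau_0$), since differentiating either the gain kernel $\tau(y)\eta(y-z,z)/\tau(y-z)$ or the loss factor $H(y)=\int_{y_0}^\infty\eta(z,y)w(z)\,\mathrm{d}z$ brings in $\partial\eta$. This means that the constant $c_*$ in the $E_1$ estimate depends on $\|\eta\|_{BC^1}$ and $\|\tau\|_{BC^1}$, not merely on $\|\eta\|_\infty$. You flag this yourself by absorbing these quantities into $c_*$, which is consistent with how the lemma is used later in the paper (the precise dependence of $c_*$ on $\eta$ never matters, only finiteness). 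It is worth also recording that $Q[u,w](y_0)=0$ for $u\in E_1$, since both the gain term (indicator) and the loss term (factor $u(y_0)=0$) vanish there; this is needed to conclude $Q[u,w]\in E_1$, and it follows from the same boundary observation you already use for the Leibniz term.
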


It is worthwhile pointing out the property of $Q[\cdot,w]$  mapping $E_j$ into itself for both $j=0$ {\it and} $j=1$ when $w\in E_0$ is fixed.  This property is crucial for the existence of classical solutions.\\

To set the stage for a fixed point formulation of \eqref{eqv}-\eqref{AW} we next focus on the polymerization term in \eqref{equ} and  recall that it is the generator of a positive evolution operator on $E_0$ with domain $E_1$.
For this we define a diffeomorphism $\Theta: Y \to (0,\infty)$ by virtue of
\begin{equation}
\label{Phi}
\Theta(y) := \int_{y_0}^{y} \frac{\mathrm{d} y'}{\tau(y')} \, , \quad y \in Y \, .
\end{equation}
Given $f \in E_0$ we put
\begin{equation}
\label{W}
(\mathcal{W}(t)f)(y) := \mathbf{1}_{[t,\infty)} \left( \Theta(y)  \right) \frac{\tau(\Theta^{-1}(\Theta(y) - t))}{\tau(y)} f(\Theta^{-1}(\Theta(y) - t)) \ , \quad y \in Y \ , \ t \geq 0 \ .
\end{equation}
It then follows from  \cite{W} that $\{\mathcal{W}(t);t \geq 0\}$ is a strongly continuous positive semigroup on $E_0$ with generator $-A$  given by
\begin{equation*}%\label{OA}
Au := \partial_y(\tau u), \quad u \in E_1\,.
\end{equation*}
Moreover, putting $\tau_*:=\|\tau\|_\infty/y_0$ so that $\tau(y)\le \tau_* y$, $\, y\in Y$, the estimate
\begin{equation} \label{stable}
\|\mathcal{W}(t)\|_{\mathcal{L}(E_0)} \leq e^{\tau_* t} \ , \quad t \geq 0 \,,
\end{equation}
holds and shows that the semigroup is stable in the sense of \cite{Pazy}.
Given $T\in (0,1]$  and $R>1$ define $J_T := [0,T]$  and 
\begin{equation}\label{Definition_V(T,R)}
\mathcal{V}_{T,R}:=\{V \in C^1(J_T) \ ; \  R^{-1} \leq V(t) \leq \|V\|_{C^1 (J_T)} \leq R\} \, .
\end{equation}
Then we introduce for $V \in \mathcal{V}_{T,R}$ the operator
\begin{equation}
\label{Definition_tilde_A}
 \mathbb{A}_V (t)u :=  V(t) \partial_y(\tau u) - L[u] \ , \quad u \in E_1 \ , \quad t \in J_T \,,
\end{equation}
and recall that $L$ is a bounded operator on $E_0$. It was shown in \cite{SW06,W} analogously to \cite[\S 5]{Pazy} that the stability \eqref{stable} implies that the operator family $\{- {\mathbb{A}}_V (t) \}_{t \in [0,T]}$ generates an evolution operator  on $E_0$. More precisely:

\begin{proposition}\label{ES}
Let $ R >0, T_0 > 0 $, and $0< T \leq T_0$ be given. Then  $ \{- {\mathbb{A}}_V (t) \}_{t \in [0,T]} $  generates for each $V \in \mathcal{V}_{T,R}$ a unique evolution operator $\mathbb{U}_V(t,s)$, $0 \leq s \leq t \leq T,$ in $E_0$ enjoying properties $(E_1)-(E_5)$ in \cite[$\S$5]{Pazy}. Moreover, there is  $\omega_0 := \omega_0 (T_0, R) > 0$ such that
\begin{equation}\label{Abschaetzung_in_L(E0)}
\|\mathbb{U}_V(t,s)\|_{\mathcal{L}(E_0)} \leq e^{\omega_0 (t-s)} \ , \quad 0 \leq s \leq t \leq T \ , \quad V \in \mathcal{V}_{T,R} \ ,
\end{equation} 
and
\begin{equation}
\label{Abschaetzung_in_L(E1)}
\|\mathbb{U}_V(t,s)\|_{\mathcal{L}(E_1)} \leq \omega_0 \ , \quad 0 \leq s \leq t \leq T \ , \quad V \in \mathcal{V}_{T,R} \,
\end{equation} 
and if $V, W  \in \mathcal{V}_{T,R}$, then
\begin{equation}\label{Abschaetzung_in_L(E1,E0)}
\|\mathbb{U}_W(t,s)-\mathbb{U}_V(t,s)\|_{\mathcal{L}(E_1, E_0)} \leq \omega_0 (t-s)\|W-V\|_{C(J_T)} \ , \quad 0 \leq s \leq t \leq T \ .
\end{equation} 
\end{proposition}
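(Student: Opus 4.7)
The plan is to invoke the Kato--Tanabe theory of evolution operators for hyperbolic Cauchy problems with constant domain, following Pazy \S 5 and the line of argument in \cite{SW06, W}. The generator splits as $-\mathbb{A}_V(t) = -V(t)A + L$, a time-rescaled transport generator plus a perturbation $L$ that is bounded on $E_0$ by Lemma~\ref{LQ}(a). I would verify three ingredients and then read off the three estimates from the abstract machinery.

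First, I would establish Kato stability of $\{-\mathbb{A}_V(t)\}_{t\in J_T}$ in $E_0$, uniformly in $V\in\mathcal{V}_{T,R}$. Since $-A$ generates the semigroup $\mathcal{W}$ with $\|\mathcal{W}(t)\|_{\mathcal{L}(E_0)}\le e^{\tau_* t}$, the operator $-V(t)A$ generates $s\mapsto \mathcal{W}(V(t)s)$, whose norm is bounded by $e^{\tau_* R s}$. For any finite sequence $0\le t_1\le\cdots\le t_k\le T$ and $s_1,\dots,s_k\ge 0$, time-scaling yields
\begin{equation*}
\Bigl\|\prod_{i=1}^{k} e^{-s_i V(t_i) A}\Bigr\|_{\mathcal{L}(E_0)} \le e^{\tau_* R \sum_i s_i},
\end{equation*}
so $\{-V(\cdot)A\}$ is stable with constants $(1,\tau_* R)$. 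Adding the $E_0$-bounded perturbation $L$ (Lemma~\ref{LQ}(a)) preserves stability, giving constants $(1,\omega_1)$ with $\omega_1=\omega_1(R,T_0)$ depending only on $R$, $\tau_*$, $\|\mu\|_\infty$, $\|\beta\|_\infty$.

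Second, $E_1$ is a $t$-independent dense domain for $-\mathbb{A}_V(t)$, and from $\mathbb{A}_V(t)u-\mathbb{A}_V(s)u=(V(t)-V(s))\,\partial_y(\tau u)$ together with $V\in C^1(J_T)$, $\|V'\|_\infty\le R$, I read off that $t\mapsto \mathbb{A}_V(t)\in\mathcal{L}(E_1,E_0)$ is Lipschitz continuous with a Lipschitz constant bounded by $R$. Third, and this is the main obstacle, I need $E_1$-stability in order to obtain the regularity estimate \eqref{Abschaetzung_in_L(E1)}. In the absence of parabolic smoothing I would achieve this either by exhibiting an isomorphism $S\in\mathcal{L}(E_1,E_0)$ (the natural candidate being $S=A+\omega I$ for $\omega$ large enough) such that $S\mathbb{A}_V(t)S^{-1}-\mathbb{A}_V(t)$ is uniformly bounded on $E_0$, using that $[A,V(t)A]=0$ and that $[A,L]$ is bounded on $E_0$ under the hypotheses \eqref{boundedkernels}, \eqref{ta}; or, more directly, by exploiting the explicit formula \eqref{W} to show that $\mathcal{W}(t)$ maps $E_1$ into $E_1$ with $\|\mathcal{W}(t)u\|_1\le C\|u\|_1$ for $t\in J_{T_0}$, a property inherited by $\{e^{-s V(t)A}\}$ through time-scaling and then by $\mathbb{U}_V$ through the Pazy product-approximation construction. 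Either route yields \eqref{Abschaetzung_in_L(E1)} with a possibly enlarged $\omega_0$.

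With these ingredients Pazy's theorem supplies the evolution operator $\mathbb{U}_V(t,s)$ satisfying $(E_1)$--$(E_5)$ together with \eqref{Abschaetzung_in_L(E0)} and \eqref{Abschaetzung_in_L(E1)}. The comparison estimate \eqref{Abschaetzung_in_L(E1,E0)} then falls out of the variation-of-parameters identity
\begin{equation*}
\mathbb{U}_W(t,s)u-\mathbb{U}_V(t,s)u = \int_s^t \mathbb{U}_W(t,\sigma)\bigl[\mathbb{A}_V(\sigma)-\mathbb{A}_W(\sigma)\bigr]\mathbb{U}_V(\sigma,s)u\,\mathrm{d}\sigma,
\end{equation*}
valid for $u\in E_1$. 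Since $\mathbb{A}_V(\sigma)-\mathbb{A}_W(\sigma)=(V(\sigma)-W(\sigma))A$ and $\|Aw\|_0\le \|w\|_1$, the integrand is bounded on $E_0$ by $\|V-W\|_{C(J_T)}\|\mathbb{U}_V(\sigma,s)u\|_1\le \omega_0\|V-W\|_{C(J_T)}\|u\|_1$ using \eqref{Abschaetzung_in_L(E1)}, and an application of \eqref{Abschaetzung_in_L(E0)} to $\mathbb{U}_W(t,\sigma)$ followed by integration in $\sigma$ delivers \eqref{Abschaetzung_in_L(E1,E0)} after a final adjustment of $\omega_0$. The only truly delicate point is the $E_1$-preservation in the third step, precisely because the principal part is a first-order transport operator that offers no regularizing effect.
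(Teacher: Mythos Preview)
Your proposal is correct and follows precisely the approach the paper itself invokes: the paper does not give an independent proof of Proposition~\ref{ES} but states that it ``was shown in \cite{SW06,W} analogously to \cite[\S 5]{Pazy},'' and your sketch (Kato stability of $-V(t)A$ via time-rescaling of $\mathcal{W}$, bounded perturbation by $L$, constant domain $E_1$ with Lipschitz dependence $t\mapsto\mathbb{A}_V(t)$, $E_1$-boundedness via an isomorphism or the explicit formula for $\mathcal{W}$, and the variation-of-parameters identity for the comparison estimate) is exactly that argument. Your identification of the $E_1$-preservation step as the only genuinely delicate point is also on target.
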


 The $u$ component of a solution $(v,u)$ to \eqref{eqv}-\eqref{AW} can  then be expressed in the form
$$
u(t)=\mathbb{U}_{V_u}(t,0)u^0+\int_{0}^{t}\mathbb{U}_{V_u}(t,s)\, Q[u(s),u(s)]  \, \mathrm{d}s \,,
$$
where 
$$
V_u(t)=\frac{v(t)}{1+\nu\displaystyle\int_{y_0}^\infty z u(t,z)\,\mathrm{d}z}\, ,
$$
which can be regarded as a fixed point equation for $u$.
Let us point out that \eqref{Abschaetzung_in_L(E1,E0)} guarantees Lipschitz continuity of the evolution operator $\mathbb{U}_{V_u}$ with respect to $V_u$ only when being considered as an operator from $E_1$ to $E_0$ while semigroup theory requires the nonlinearity $Q$ to map into $E_1$ to guarantee time differentiability of the integral term. To cope with these somewhat antagonizing facts the fixed point argument has to be performed twice, once in $E_1$ to ensure time differentiability  with regard to classical solutions and once in $E_0$ to handle the quasilinear part $V_u$ of the problem. As pointed out before, the properties of $Q$ stated in part (b) of Lemma~\ref{LQ} are crucial in this respect as we shall see in the next section.

\subsection{Local Existence}

\noindent Let $v^0>0$ and $u^0\in E_1\cap E_0^+$ be given and let $S>0$ be such that
\bqn\label{S}
S^{-1}\le v^0\le S\,,\qquad \|u^0\|_{1}\le S\,.
\eqn
{ We put
$$
r(S):=  2S \nu\left[ (S+2S\|\beta\|_\infty+\lambda)\frac{\|\tau\|_\infty}{y_0}+ \|\mu\|_\infty+\|\beta\|_\infty\right]
$$
and} then introduce for $\delta \in \{0,\nu\}$ the complete metric space
\bqnn
\begin{split}
Z_T^{\delta} := \Big\{   u \in C(J_T, E^{+}_{0}) \ ; & \ [t\mapsto  \delta \|  u(t)\|_{0}]\in  C^1(J_T)\,, \,\|  u(t)\|_{0} \leq 2S\,,\\
& { \Big| \frac{\rd}{\rd t}  \delta \|  u(t)\|_{0}\Big| \le r(S) }\,,\, t\in J_T\,,\,
  u(0) =u^0\Big\}
\end{split}
\eqnn
 equipped with the metric
\begin{equation*}
d_{Z_T^{\delta}}(  u,   w) := \|  u-  w\|_{C(J_T,E_0)} + \delta \big\| \|   u\|_{0}- \|   w\|_{0}   \big\|_{C^1(J_T)}\,,\qquad   u\,,   w\in Z_T^{\delta}\,.
\end{equation*}
Note that 
$$
\delta \|  u(t)\|_{0} = \delta \int_{y_0}^\infty yu(t,y)\,\rd y\,,\quad t\in J_T\,,
$$
for $u\in Z_T^{\delta}$ and that this term vanishes for $\delta=0$.
Let $\bar u\in Z_T^\nu$ be fixed and put
\begin{equation}
 g(\bar u(t)):=2\int_{y_0}^{\infty} \bar u(t,y) \beta (y) \int_0^{y_0} z \kappa(z,y)  \, \mathrm{d}z \, \mathrm{d}y\,,
\end{equation}
and
\begin{equation}
 p(\bar u(t)):=\frac{1}{1+\nu \|\bar u(t)\|_{0}} \int_{y_0}^{\infty} \tau(y)  \bar u(t,y) \, \mathrm{d}y \,.
\end{equation}
Note that both $g(\bar u)$ and $p(\bar u)$ are non-negative functions.
Consequently, the function $v_{\bar u}\in C^1(J_T)$, given by
\begin{equation}\label{v}
\begin{split}
v_{\bar u}(t) := &\exp\left(-\gamma t -  \int_0^t p(\bar u(\sigma)) \, \mathrm{d} \sigma\right)v^0 \\
&\quad  + \int _{0}^{t} \exp\left(-\gamma (t-s) - \int_s^t p(\bar u(\sigma)) \, \mathrm{d} \sigma\right)(\lambda + g(\bar u(s))) \, \mathrm{d} s \ , \quad  t \in J_T \ ,  
\end{split}
\end{equation}
defines the unique solution to \eqref{eqv} with $v_{\bar u}(0)=v^0$, when $u$ therein is replaced by $\bar u$. We then introduce
\begin{equation}\label{V}
V_{\bar u}(t):=\frac{v_{\bar u}(t)}{1+\nu \| \bar u(t)\|_0}\,,\quad t\in J_T\,,\quad \bar u\in  Z_T^\nu\,.
\end{equation}
Owing to \eqref{mon_pres} and the assumptions on $\beta$ and $\tau$ we have
\bqn\label{g}
 g(\bar u(t)) \le \|\beta\|_\infty\,\|\bar u(t)\|_0\,,\qquad  p(\bar u(t))\le \frac{\|\tau\|_\infty}{y_0}\, \|\bar u(t)\|_0
\eqn
for $0\le t\le T$. { Therefore, since $\bar u\in  Z_T^\nu$ and
$$
V_{\bar u}'(t)= \frac{v_{\bar u}'(t)}{1+\nu \| \bar u(t)\|_0}-\frac{v_{\bar u}(t)}{(1+\nu \| \bar u(t)\|_0)^2}\, \nu\, \frac{\rd}{\rd t} \|\bar u(t)\|_0\,,\quad t\in J_T\,,
$$
it readily follows from} \eqref{v} and \eqref{eqv} that there exists a constant $R(S)>1$ independent of $T\in (0,1]$ (and $\bar u$) such that $V_{\bar u}\in \mathcal{V}_{T,R(S)}$ for $\bar u\in Z_T^\nu$. Moreover, since
$$
\big\vert p(\bar u_1(t))-p(\bar u_2(t))\big\vert \le \frac{\|\tau\|_\infty}{y_0}\, \|\bar u_1(t)-\bar u_2(t)\|_0 +  \frac{\nu \|\tau\|_\infty}{y_0}\, \|\bar u_1(t)\|_0\,\big\vert\|\bar u_1(t)\|_0-\|\bar u_2(t)\|_0\big\vert\,,
$$
formula \eqref{v}  implies that there is a constant $c(S)>0$ independent of $T\in (0,1]$ such that
\bqnn
|v_{\bar u_1}(t)-v_{\bar u_2}(t)| 
\leq  T\,c(S)\,\|\bar u_1 - \bar u_2\|_{C(J_T, E_0)} \, , \quad 0 \leq  t \leq T \, , \quad  \bar u_1 \, , \bar u_2  \in Z_T^\nu \,. 
\eqnn 
Therefore, from 
$$
\big|V_{\bar u_1}(t)-V_{\bar u_2}(t)\big| 
\leq |v_{\bar u_1}(t)-v_{\bar u_2}(t)| +  \nu\,v_{\bar u_2}(t)\big\vert \|\bar u_1(t)\|_0-\|\bar u_2(t)\|_0\big\vert
$$
we obtain, on the one hand,
\bqn\label{lipv}
\begin{split}
\big|V_{\bar u_1}(t)-V_{\bar u_2}(t)\big| 
\leq c(S)\,\|\bar u_1 - \bar u_2\|_{C(J_T, E_0)} 
\end{split} 
\eqn
 and, on the other hand since $t\mapsto \nu \|u_j(t)\|_0$ is differentiable,
\bqn\label{lipv2}
\begin{split}
\big|V_{\bar u_1}(t)-V_{\bar u_2}(t)\big| 
& \le  T\,c(S)\,\big(\|\bar u_1 - \bar u_2\|_{C(J_T, E_0)} 
+  \nu\,\big\|\|\bar u_1\|_0 - \|\bar u_2\|_0\big\|_{C^1(J_T)}\big)\\
& =T\,c(S)\, d_{Z_T^\nu}(\bar u_1,\bar u_2)
\end{split} 
\eqn 
for $0 \leq  t \leq T\le 1$ and $\bar u_1 \, , \bar u_2  \in Z_T^\nu$.
We then consider for fixed $\bar u\in Z_T^\nu$ and $\hat u\in Z_T^0$ the equation 
%\footnote{{ One fixed point with $\hat u =\bar u\in Z_T^\nu$ does not seem to work since when integrating the $Q[.,\hat u]$ term with respect to $y\rd y$  does not give a contraction with respect to $\hat u$}} 
\begin{equation}\label{E1a}
  u' +\mathbb{A}_{V_{\bar u}}(t)u = Q[u, {\hat u}(t)] \ , \quad t \in J_T \,, \qquad  u(0)=u^0\,,
\end{equation}
where the operator
$$ -\mathbb{A}_{V_{\bar u}}(t)u =  -V_{\bar u}(t) \partial_y(\tau u) + L[u]  \, , \quad u \in E_1 \, , \quad t \in J_T \, , 
$$  
is meaningful since $V_{\bar u}\in \mathcal{V}_{T,R(S)}$ and thus generates an evolution operator on $E_0$ with properties as stated in Proposition~\ref{ES}. Note that,  for $\bar u\in Z_T^\nu$ and $\hat u\in Z_T^0$ still fixed, the right hand side of \eqref{E1a} is a bounded linear operator from $E_1$ into itself  with respect to $u$ according to Lemma~\ref{LQ} (b) which depends continuously on $t$.
Standard arguments (e.g. see \cite[$\S 5$]{Pazy}) then ensure that \eqref{E1a} has a unique classical solution  
\bqn\label{u}
u:=u(\bar u, \hat u)\in C(J_T, E_1) \cap C^1(J_T, E_0)\,.
\eqn
 To prove that this solution is non-negative we introduce for technical reasons the constant
$$
\omega:= \frac{4S}{y_0}\,\|\eta\|_\infty+\|\mu+\beta\|_\infty\,.
$$ 
We then observe that $u$ also solves the problem
\begin{equation}\label{positive}
 w' +({A}_{V_ {\bar u}}(t) + \omega) w =  H(t)[w] \, , \quad t\in J_T\,, \qquad w(0)=u^0\,,
\end{equation} 
where 
$$
-{A}_{V_ {\bar u}}(t) w :=  -V_{\bar u}(t) \partial_y(\tau w)\,,\quad w\in E_1\,,\quad t\in J_T\,,
$$ 
generates an evolution operator on $E_0$ according to Proposition~\ref{ES}
and the bounded operator $H(t) \in \mathcal{L}(E_0)$, given by
$$
H(t)[w]:=Q[w, \hat u(t)] +  L[w]+ \omega w \,,\quad w\in E_0\,,
$$
depends continuously on $t$ and satisfies 
\bqn\label{h}
H(t)[w] \in E_0^+\,,\quad w \in E_0^+
\eqn 
due to the choice of the constant $\omega$. Recall that the semigroup $\mathcal{W}$ on $E_0$ generated by $-\partial_y(\tau \cdot)$ is positive. Then clearly $-A_{V_ {\bar u}}(t) - \omega$ generates a positive semigroup on  $E_0$ for each $t\in J_T$ fixed. The construction of evolution operators (see \cite[Theorem 5.3.1]{Pazy} and \cite{W}) entails that the evolution operator generated by $-{A}_{V_ {\bar u}}-\omega$ is positive as well. This together with $u^0\in E_0^+$ and  \eqref{h} then easily yields that $u(t)\in E_0^+$ for $t\in J_T$ (see also the proof of \cite[Theorem 3.1]{SW06}). 

Keeping $\bar u\in Z_T^\nu$ still fixed we next show that the mapping
$
\hat u\mapsto \Lambda_{\bar u}[\hat u]:=u(\bar u,\hat u)
$ is a contraction on $Z_T^0$ for sufficiently small $T\in (0,1]$. For this note (setting $E^+_1:=E_0^+\cap E_1$) that 
$$
\Lambda_{\bar u}[\hat u]=u(\bar u,\hat u)\in C(J_T, E_1^+) \cap C^1(J_T, E_0)
$$ 
satisfies
\begin{equation}\label{E11}
  u' +\mathbb{A}_{V_{\bar u}}(t)u = Q[u, {\hat u}(t)]  \ , \quad t \in J_T \,, \qquad  u(0)=u^0\,,
\end{equation}
and can thus be written as
\begin{equation}\label{Lambda}
\Lambda_{\bar u}[\hat u](t)=\mathbb{U}_{V_ {\bar u}}(t,0)u^0+\int_{0}^{t}\mathbb{U}_{V_ {\bar u}}(t,s)\, Q[\Lambda_{\bar u}[\hat u](s), \hat u(s)] \, \mathrm{d}s \ , \quad  t \in J_T \,,
\end{equation}
where the evolution operator $\mathbb{U}_{V_ {\bar u}}(t,s)$ enjoys the properties stated in Proposition~\ref{ES} with $\omega_0:=\omega_0(1,R(S))$. Consequently, it follows from Proposition~\ref{ES} and Lemma~\ref{LQ} that
\begin{equation}\label{323}
\begin{split}
\|\Lambda_{\bar u}[\hat u](t)\|_{k}
& \leq  \|\mathbb{U}_{V_ {\bar u}}(t,0)\|_{\mathcal{L}(E_k)}\, \|u^0\|_{k}+\int_{0}^{t}\|\mathbb{U}_{V_ {\bar u}}(t,s)\|_{\mathcal{L}(E_k)}\,\| Q[\Lambda_{\bar u}[\hat u](s), \bar u(s)]\|_{k}\, \mathrm{d}s
\\ & 
\le e^{\omega_0 (T(1-k)+k)}\,\|u^0\|_k +2S\,c_*\,\|\eta\|_\infty\, e^{\omega_0}\int_0^t \|\Lambda_{\bar u}[\hat u](s)\|_k\,\mathrm{d}s
\end{split}
\end{equation}
for $k=0,1$ and $0\le t\le T\le 1$. Thus, taking $k=0$  in \eqref{323} and recalling \eqref{S}, Gronwall's lemma entails that 
\bqn\label{2S}
\|\Lambda_{\bar u}[\hat u](t)\|_{0}\le 2S\,,\quad 0\le t\le T\,,
\eqn
provided that  $T=T(S)\in (0,1]$ is chosen sufficiently small. This shows that $\Lambda_{\bar u}[\hat u]=u(\bar u,\hat u)\in Z_T^0$ for $\hat u\in Z_T^0$. Moreover, taking $k=1$  in \eqref{323} Gronwall's lemma also implies that 
\bqn\label{bound}
\|\Lambda_{\bar u}[\hat u](t)\|_{1}\le m(S)\,,\quad 0\le t\le T\,,
\eqn
for some constant $m(S)>0$. 
To show that the mapping $\Lambda_{\bar u} : Z_T^0\rightarrow Z_T^0$
 is contractive let $\hat u_1, \hat u_2\in Z_T^0$.
Then, \eqref{Lambda} implies for $0\le t\le T$,
\begin{equation*}
\begin{split}
\|\Lambda_{\bar u}[\hat u_1](t) - \Lambda_{\bar u}[\hat u_2](t)\|_{0} & \leq   \int_{0}^{t}\|\mathbb{U}_{V_ {\bar u}}(t,s)\|_{\mathcal{L}(E_0)} \,\| Q[\Lambda_{\bar u}[\hat u_1](s), \hat u_1(s)-\hat u_2(s)]\|_{0}\, \mathrm{d}s
\\ & \quad  + \int_{0}^{t}\|\mathbb{U}_{V_ {\bar u}}(t,s)\|_{\mathcal{L}(E_0)} \,\| Q[\Lambda_{\bar u}[\hat u_1](s)-\Lambda_{\bar u}[\hat u_2](s), \hat u_2(s)]\|_{0}\, \mathrm{d}s 
\end{split}
\end{equation*}
and hence Proposition~\ref{ES}, Lemma~\ref{LQ}, and \eqref{2S}  give
\begin{equation*}
\begin{split}
\|\Lambda_{\bar u}[\hat u_1](t) - \Lambda_{\bar u}[\hat u_2](t)\|_{0} & \leq  2S \,  c_*\,\|\eta\|_\infty\, e^{\omega_0 }\,T\,  \| \hat u_1-\hat u_2\|_{C(J_T,E_0)}\\ 
& \quad  + 2S\, c_*\,\|\eta\|_\infty\, e^{\omega_0} \int_0^t \|\Lambda_{\bar u}[\hat u_1](s) - \Lambda_{\bar u}[\hat u_2(s)\|_0\,\rd s\,.
\end{split}
\end{equation*}
Gronwall's lemma implies
\begin{equation*}
\|\Lambda_{\bar u}[\hat u_1](t) - \Lambda_{\bar u}[\hat u_2](t)\|_{0}  \leq T\, c(S)\, \| \hat u_1-\hat u_2\|_{C(J_T,E_0)}\,,\quad 0\le t\le T \,.
\end{equation*}
Consequently, for each $\bar u\in Z_T^\nu$ the mapping $\Lambda_{\bar u}$ defines a contraction on $Z_T^0$ when $T=T(S)\in (0,1]$ is chosen sufficiently small and $\Lambda_{\bar u}$ thus has a unique fixed point $\Gamma(\bar u)\in Z_T^0$. Recall that  $\Gamma(\bar u)$ belongs in addition to $C(J_T, E_1) \cap C^1(J_T, E_0)$ according to \eqref{u}.

We next study  the mapping
$
\Gamma=[\bar u\mapsto \Gamma(\bar u)]
$
and show that it is a contraction on $Z_T^\nu$  provided  $T=T(S)\in (0,1]$ is sufficiently small. The corresponding unique fixed point along with the corresponding solution to \eqref{eqv} will then represent the local solution to \eqref{eqv}-\eqref{AW}.  To this end note that \eqref{Lambda} reads for the fixed point $u=\Gamma(\bar u)$ of $\Lambda_{\bar u}$  (omitting the hat of $u$ for simplicity) as 
\begin{equation}\label{Lambda1}
u(t)=\mathbb{U}_{V_ {\bar u}}(t,0)u^0+\int_{0}^{t}\mathbb{U}_{V_ {\bar u}}(t,s)\, Q[u(s), u(s)] \, \mathrm{d}s \ , \quad  t \in J_T \,.
\end{equation}
Now, consider $\bar u_1, \bar u_2\in Z_T^\nu$ and put $u_1:=\Gamma(\bar u_1)$ and $u_2:=\Gamma(\bar u_2)$.
Then we infer from \eqref{Lambda1} for $t \in J_T$
\begin{equation*}
\begin{split}
\|u_1(t) -  u_2(t)\|_{0} & \leq  \|\mathbb{U}_{V_ {\bar u_1}}(t,0) -\mathbb{U}_{V_ {\bar u_2}}(t,0)\|_{\mathcal{L}(E_1, E_0)}\,\|u^0\|_{E_1} 
\\ 
& \quad  + \int_{0}^{t}\|\mathbb{U}_{V_ {\bar u_1}}(t,s) - \mathbb{U}_{V_ {\bar u_2}}(t,s)\|_{\mathcal{L}(E_1, E_0)}\, \| Q[u_1(s), u_1(s)]\|_{1}\, \mathrm{d}s\\ 
& \quad  + \int_{0}^{t}\|\mathbb{U}_{V_ {\bar u_2}}(t,s)\|_{\mathcal{L}(E_0)} \,\| Q[u_2(s), u_1(s)-u_2(s)]\|_{0}\, \mathrm{d}s
\\ & \quad  + \int_{0}^{t}\|\mathbb{U}_{V_ {\bar u_2}}(t,s)\|_{\mathcal{L}(E_0)} \,\| Q[u_1(s)-u_2(s), u_1(s)]\|_{0}\, \mathrm{d}s \,,
\end{split}
\end{equation*}
and hence, from Proposition~\ref{ES}, Lemma~\ref{LQ}, and \eqref{bound},
\begin{equation*}
\begin{split}
\|u_1(t) - u_2(t)\|_{0} & \leq  \omega_0\, T\, \|V_{\bar u_1}-V_{\bar u_2}\|_{C(J_T)}\,\|u^0\|_{E_1} 
\\ 
& \quad  +  2S\, c_*\,\|\eta\|_\infty\, m(S) \omega_0\, T \, \|V_{\bar u_1}-V_{\bar u_2}\|_{C(J_T)}
\\  
& \quad  + 4S\, c_*\,\|\eta\|_\infty\, e^{\omega_0} \int_0^t \|u_1(s)-u_2(s)\|_0\,\rd s\,.
\end{split}
\end{equation*}
Gronwall's lemma implies
\begin{equation*}
\|u_1(t) - u_2(t)\|_{0}  \leq T\, c(S)\,  \|V_{\bar u_1}-V_{\bar u_2}\|_{C(J_T)} \,,\quad 0\le t\le T \,,
\end{equation*}
and thus, from \eqref{lipv},
\begin{equation}\label{est1}
\|u_1(t) - u_2(t)\|_{0} \leq T\, c(S)\,   \| \bar u_1-\bar u_2\|_{C(J_T,E_0)} \,,\quad 0\le t\le T \,,
\end{equation}
for some constant $c(S)>0$. Next, \eqref{Lambda1} for $u=\Gamma(\bar u)$ with $\bar u\in Z_T^\nu$ can also be written (see also \eqref{E11}) as
\begin{equation}\label{equ1}
 u' +V_{\bar u}(t) \partial_y(\tau u) = Q[u(t), u(t)] +  L[u(t)]  \,,\quad 0\le t\le T \,.
\end{equation} 
We shall integrate this equation
with respect to $y\in (y_0,\infty)$. Note that  $u(t) \in E_1$ and  the  assumption \eqref{ta} on $\tau$ imply
\begin{equation}\label{tau}
\int_{y_0}^{\infty} y \partial_y(\tau(y) u(t,y)) \, \mathrm{d}y 
   =- \int_{y_0}^{\infty} \tau(y) u(t,y) \, \mathrm{d} y  \, .
\end{equation} 
Next, \eqref{mon_pres}, \eqref{L_Test}, and \eqref{tilde} entail that
\begin{equation}
\begin{split}\label{Q1}
\int_{y_0}^{\infty} y L[u(t)](y) \, \mathrm{d}y +\int_{y_0}^{\infty} y  Q[u(t), u(t)](y)  \, \mathrm{d}y=  -\int_{y_0}^{\infty} y \mu(y) u(t,y)\, \mathrm{d}y
 - g(u(t)) \, .
\end{split}
\end{equation} 
Consequently, we derive from \eqref{equ1}-\eqref{Q1} that
\begin{equation}\label{b0}
\begin{split}
\frac{\rd}{\rd t} \int_{y_0}^{\infty} y u(t,y) \, \mathrm{d}y  =\ & V_{\bar u}(t) \int_{y_0}^{\infty} \tau(y) u(t,y) \, \mathrm{d} y
-\int_{y_0}^{\infty} y \mu(y) u(t,y)\, \mathrm{d}y- g(u(t)) \,,
\end{split}
\end{equation} 
for $ 0\le t\le T$ and $u=\Gamma(\bar u)$ with $\bar u\in Z_T^\nu$.  In particular, \eqref{b0} warrants
$$
\Big| \frac{\rd}{\rd t} \nu\|  u(t)\|_{0}\Big| \le\nu \left(V_{\bar u}(t) \frac{\|\tau\|_\infty}{y_0}+\|\mu\|_\infty+\|\beta\|_\infty\right) \|u(t)\|_0 \le r(S) \,,\quad t\in J_T\,,
$$
since \eqref{S}, \eqref{v} , and \eqref{g}  imply
$$
V_{\bar u}(t)\le v_{\bar u}(t)\le S+2S\|\beta\|_\infty+\lambda\,,\quad t\in J_T\,,
$$
and hence $u=\Gamma(\bar u)\in Z_T^\nu$ for $\bar u\in Z_T^\nu$.
Now, consider again $\bar u_1, \bar u_2\in Z_T^\nu$ and put $u_1:=\Gamma(\bar u_1)$ and $u_2:=\Gamma(\bar u_2)$. We then deduce from \eqref{b0}
\begin{equation*}
\begin{split}
\left\vert\frac{\rd}{\rd t}\int_{y_0}^\infty y\big(u_1(t,y)-u_2(t,y)\big)\,\rd y\right\vert
 \le \ &
\big\vert V_{\bar u_1}(t)-V_{\bar u_2}(t)\big\vert\, \int_{y_0}^{\infty} \tau(y) u_1(t,y) \, \mathrm{d} y\\
&+ V_{\bar u_2}(t)\left\vert  \int_{y_0}^{\infty} \tau(y) \big(u_1(t,y)-u_2(t,y)\big) \, \mathrm{d} y\right\vert\\
&+\left\vert\int_{y_0}^{\infty} y \mu(y) \big(u_1(t,y)-u_2(t,y)\big)\, \mathrm{d}y\right\vert+ \big\vert g\big(u_1(t)-u_2(t)\big)\big\vert\,.
\end{split}
\end{equation*} 
Since the kernels are bounded we obtain
\begin{equation*}
\begin{split} 
\left\vert\frac{\rd}{\rd t}\int_{y_0}^\infty y\big(u_1(t,y)-u_2(t,y)\big)\,\rd y\right\vert
 \le \ &
 c(S)\, \big\vert V_{\bar u_1}(t)-V_{\bar u_2}(t)\big\vert +c(S)\,\|u_1(t)-u_2(t)\|_0\\ 
&  + \big\vert g\big(u_1(t)-u_2(t)\big)\big\vert
\,.
\end{split}
\end{equation*} 
Invoking \eqref{g}, \eqref{lipv2}, and \eqref{est1} we get
\begin{equation}
\begin{split}\label{L1b}
\left\vert\frac{\rd}{\rd t}\int_{y_0}^\infty y\big(u_1(t,y)-u_2(t,y)\big)\,\rd y\right\vert
 \le \ &
 T\,c(S)\, d_{Z_T^\nu}(\bar u_1,\bar u_2) \,,\quad 0\le t\le T\,.
\end{split}
\end{equation} 
Combining \eqref{est1} and \eqref{L1b} shows that
\begin{equation*}
d_{Z_T^\nu}\big(\Gamma(\bar u_1),\Gamma(\bar u_2)\big)
 \le 
 T\,c(S)\, d_{Z_T^\nu}(\bar u_1,\bar u_2)\,, \quad \bar u_1, \bar u_2\in Z_T^\nu\,,
\end{equation*}
 that is, $\bar u\mapsto \Gamma(\bar u)$ is a contraction on $Z_T^\nu$ provided that $T=T(S)\in (0,1]$ is chosen sufficiently small. The contraction mapping principle then yields a unique fixed point $u$ so that $(v_u,u)$ is the unique solution to \eqref{eqv}-\eqref{AW} on the interval $[0,T]$. Since the choice of $T=T(S)$ only depends on $S$ from \eqref{S}, the following statement is immediate:

\begin{proposition}\label{Prop2}
Given the assumptions of Theorem~\ref{T1}, there exists a unique maximal  solution $(v,u)$ to \eqref{eqv}-\eqref{AW} belonging to $C(J, \mathbb{R}^+ \times E_1^+) \cap C^1(J, \mathbb{R} \times E_0)$ on a maximal interval $J$ which is open in $\mathbb{R}^+$. If $t^+ := \sup J < \infty$, then
\begin{equation}\label{tplus}
\liminf_{{t \nearrow t^+}} v(t) = 0  \quad \textit{ or } \quad {\limsup_{t \nearrow t^+}} \,\big(v(t) + \|u(t)\|_{E_1}\big) = \infty \, .
\end{equation}
\end{proposition}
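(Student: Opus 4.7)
The plan is to derive Proposition~\ref{Prop2} from the local existence result proved in the preceding subsection via the standard continuation argument, exploiting the fact that the length $T=T(S)$ of the local existence interval depends only on the bound $S$ in \eqref{S} on the initial data.

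First I would define
$$
t^+ := \sup\bigl\{T>0\,;\, \text{\eqref{eqv}-\eqref{AW} admits a classical solution } (v,u)\in C([0,T], \mathbb{R}^+\times E_1^+)\cap C^1([0,T], \mathbb{R}\times E_0)\bigr\},
$$
which is strictly positive by the local existence result. Global uniqueness on $[0,t^+)$ is obtained by patching: if $(v_1,u_1)$ and $(v_2,u_2)$ are two solutions agreeing on $[0,t_*]$ for some $t_*<t^+$, then applying the contraction argument on $[t_*,t_*+T(S)]$ with $S$ controlling $(v_j(t_*),u_j(t_*))$ yields $(v_1,u_1)=(v_2,u_2)$ beyond $t_*$; the supremum of such $t_*$ then equals $t^+$. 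The set $J:=[0,t^+)$ is open in $\mathbb{R}^+$ by construction, since whenever a solution exists on $[0,T]$ the local existence result extends it to $[0,T+T(S)]$ for some $S$ depending on the endpoint data.

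For the blow-up alternative, I argue by contradiction. Suppose $t^+<\infty$ but \eqref{tplus} fails, that is,
$$
\varepsilon := \liminf_{t \nearrow t^+} v(t) > 0 \qquad \text{and} \qquad M := \limsup_{t \nearrow t^+}\bigl(v(t)+\|u(t)\|_{E_1}\bigr) < \infty.
$$
Choosing $S := \max\{\varepsilon^{-1},M\}+1$, there exists $t_0\in(0,t^+)$ with $t^+ - t_0 < T(S)/2$ such that the pair $(v(t_0),u(t_0))$ satisfies the analogue of \eqref{S}, i.e. $S^{-1}\le v(t_0)\le S$ and $\|u(t_0)\|_1\le S$. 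Applying the local existence result with $(v(t_0),u(t_0))$ as initial data yields a solution on $[t_0,t_0+T(S)]$, which by uniqueness coincides with $(v,u)$ on $[t_0,t^+)$ and extends it strictly beyond $t^+$. This contradicts the maximality of $t^+$ and establishes \eqref{tplus}.

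The main step, and the only one that is not entirely routine, is the verification that $T(S)$ indeed depends only on $S$ and not on the particular initial data satisfying \eqref{S}: this was already built into the local existence argument, since every constant controlling the contraction estimates (the bound $R(S)$ for $\mathcal{V}_{T,R(S)}$, the growth constant $r(S)$, and the Lipschitz constants $c(S)$, $m(S)$) was made to depend only on $S$ and on the bounded kernel data. Consequently the contradiction step goes through exactly as above, giving the desired maximal solution together with the blow-up criterion \eqref{tplus}.
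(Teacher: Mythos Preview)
Your argument is correct and is precisely the standard continuation argument the paper has in mind when it declares the proposition ``immediate'' from the fact that $T=T(S)$ depends only on the bound $S$ in \eqref{S}. The paper does not spell out the details you provide, so your write-up is a faithful expansion of the same reasoning rather than a different approach.
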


\noindent Let us point out that the solution $(v,u)$ satisfies
\begin{equation*}
  u' +\mathbb{A}_{V_u}(t)u = Q[u, u]  \ , \quad t \in J \,, \qquad  u(0)=u^0\,,
\end{equation*}
with $V_u$ being defined in \eqref{V}
and $u$ can thus be written as
\begin{equation*}
u(t)=\mathbb{U}_{V_u}(t,0)u^0+\int_{0}^{t}\mathbb{U}_{V_u}(t,s)\, Q[u(s), u(s)] \, \mathrm{d}s \ , \quad  t \in J \,.
\end{equation*}

%%%%%%%%%%%%%%%%%%%%%%%%%%%%%%%%%%%%%%%%%%%%%%%%%%%%%%%%
\subsection{Global Existence}
%%%%%%%%%%%%%%%%%%%%%%%%%%%%%%%%%%%%%%%%%%%%%%%%%%%%%%%%

We next show that \eqref{tplus} cannot occur and the solution provided by Proposition~\ref{Prop2} thus exists on $J=\R^+$. For this we note the monomer balance law 
\begin{equation}\label{bb1}
\dot v (t) + \frac{\mathrm{d}}{\mathrm{d}t} \int_{y_0}^{\infty} y u(t,y) \, \mathrm{d}y = \lambda - \gamma v(t) - \int_{y_0}^{\infty} y \mu(y) u(t,y) \, \mathrm{d}y \ , \quad t \in J\,,
\end{equation}
which  now readily follows from \eqref{b0} and \eqref{eqv}. This turns out to be crucial for global existence as it implies the a priori bound
\begin{equation}\label{v_0_u_0}
v(t) + \|u(t)\|_{E_0} \leq v^0 + \|u^0\|_{E_0} + \lambda t \,,\quad t \in J\, .
\end{equation}
We then argue by contradiction and suppose that $t^+ < \infty$. Recalling \eqref{eqv}   
 we derive from \eqref{g} and \eqref{v_0_u_0}
$$ 
 v'(t) \leq \lambda + g(u(t)) \leq \lambda + \|\beta\|_{\infty}\|u(t)\|_{E_0} \leq c(t^+)  \, ,
$$
while \eqref{v_0_u_0} also implies
$$
v'(t) \geq -\gamma v(t) -  \frac{v(t)}{1+\nu\|u(t)\|_0} \int_{y_0}^\infty \tau(y) u(t,y)\,\rd y \ge  -\gamma v(t) -  v(t) \|\tau\|_\infty \|u(t)\|_0 \geq   - c(t^+) 
$$
for $t\in J$.
Therefore,
\begin{equation}\label{v1}
\|v\|_{C^1(J)} \leq c(t^+) \ .
\end{equation}
Furthermore, \eqref{v} and \eqref{g}  along with \eqref{v_0_u_0} warrant
\begin{equation}\label{v2}
v(t) \geq  \exp\left(-\gamma t -  \int_0^t p(\bar u(\sigma)) \, \mathrm{d} \sigma\right)v^0 \ge \exp\left(-\gamma t^+ -  c(t^+)\right)v^0  > 0 
\end{equation}
for $t \in J $.
Consequently, there exists $R = R(c(t^+))>0$ such that for each $0<T< t^+$ we have  $v \in \mathcal{V}_{T, R}$. Hence, it follows from Proposition~\ref{ES} that
\begin{equation*}
\|\mathbb{U}_{V_u}(t,s)\|_{\mathcal{L}(E_1)} \leq c(t^+) \ , \quad 0 \leq s \leq t < t^+ \, .
\end{equation*} 
We next infer from Lemma~\ref{LQ} and \eqref{v_0_u_0} that
$$ 
\|Q[u(t), u(t)]\|_{1}  \leq c_*\,\|\eta\|_\infty \|u(t)\|_1\,\|u(t)\|_0\le \, c(t^+)\|u(t)\|_{1} \ , \quad t \in J \ .$$
Therefore,
\begin{equation}
\begin{split}
\|u(t)\|_{1} & \leq \|\mathbb{U}_{V_u}(t,0)\|_{\mathcal{L}(E_1)}\| u^0\|_{1} + \int_0^t \|\mathbb{U}_{V_u}(t,s)\|_{\mathcal{L}(E_1)}\|Q[u(s), u(s)]\|_{1} \, \mathrm{d}s 
\\ & \leq c(t^+)\|u^0\|_{1} + c(t^+)\int_0^t\|u(s)\|_{1}\, \mathrm{d}s \ , \quad t \in J \ 
\end{split}
\end{equation}
so that Gronwall's lemma ensures
\begin{equation}\label{u1}
\|u(t)\|_{1}  \leq c(t^+) \ , \quad t \in J \, .
\end{equation}
Consequently, \eqref{v1}, \eqref{v2}, and \eqref{u1} rule out the occurrence of \eqref{tplus} contradicting our assumption of a finite $t^+$, hence $t^+=\infty$. This completes the proof of Theorem~\ref{T1}.

\subsection{Finite Speed of Propagation}

For later purposes when dealing with weak solutions we consider compactly supported initial values and show that the support propagates with finite speed provided that large polymers do not join.

\begin{lemma}\label{supp}
Let the assumptions of Theorem~\ref{T1} hold and suppose there is $S_1 > 0$ such that
 $\eta(y,z) = 0$    for $(y, z) \in   Y \times Y$  with  $z + y > S_1$. Let $v^0>0$ and $u^0\in E_1\cap E_0^+$ with
$
\mathrm{supp} \, u^0  \subset  [y_0, S_0]
$ 
for some $S_0 > y_0$. Let $(v,u)$ be the corresponding solution to \eqref{eqv}-\eqref{AW} provided by Theorem~\ref{T1}. Then
$$
\mathrm{supp} \, u(t)  \subset  [y_0, S(t)] \ , \quad t\ge 0 \, ,
$$ 
where $S$ is the solution to the ode
$$
S'(t) = \dfrac{v(t)}{1+\nu \|u(t)\|_{E_0}}\tau(S)\,,\quad t>0\,,\qquad S(0) = \max\{S_0,S_1\}\,,
$$         
that is,
$$ 
S(t) = \phi^{-1}\left( \int_{0}^{t} \frac{v(s)}{1 + \nu \|u(s)\|_{E_0}} \, \mathrm{d} s \right) \quad \text{ with }\quad  \phi(r) := \int_{\max\{S_0,S_1\}}^{r} \frac{\mathrm{d}z}{\tau(z)}\,. $$ 
\end{lemma}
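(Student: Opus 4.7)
The function $S$ is, by construction, the integral curve of the first-order transport operator appearing in (\ref{equ}): the ODE reads $\dot S(t)=V_u(t)\tau(S(t))$ with $V_u$ as in (\ref{V}) and $S(0)=\max\{S_0,S_1\}$. The strategy is to invoke the mild formulation
$$
u(t)=\mathbb{U}_{V_u}(t,0)u^0+\int_0^t \mathbb{U}_{V_u}(t,s)\,Q[u(s),u(s)]\,\rd s
$$
recorded after Proposition~\ref{Prop2}, and to show ingredient by ingredient that each piece propagates support along this characteristic.

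I would first establish that the transport evolution operator $\mathbb{W}_{V_u}(t,s)$ associated with $-V_u(t)\partial_y(\tau\cdot)$ alone propagates support along the characteristic. This is transparent from the explicit formula (\ref{W}) for the autonomous semigroup $\mathcal{W}$: if $\mathrm{supp}(f)\subset[y_0,R]$ then $\mathrm{supp}(\mathcal{W}(t)f)\subset[y_0,\Theta^{-1}(\Theta(R)+t)]$, and the time-dependent analogue holds with $\int_s^t V_u(\sigma)\,\rd\sigma$ in place of $t$. Next, the fragmentation operator $L$ is support-non-increasing: since $L[f](y)$ involves $f(z)$ only for $z\ge y$, one has $\mathrm{supp}(L[f])\subset\mathrm{supp}(f)$. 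Writing $\mathbb{U}_{V_u}$ via the Duhamel series based on $\mathbb{W}_{V_u}$ perturbed by the bounded operator $L$, an induction on the terms yields the same support-propagation property for the full evolution operator. For $Q$, the hypothesis $\eta(y,z)=0$ whenever $y+z>S_1$ forces the gain term in $Q[f,f](y)$ to vanish for $y>S_1$, while the loss term is supported inside $\mathrm{supp}(f)$; hence $\mathrm{supp}(Q[f,f])\subset[y_0,R]$ whenever $\mathrm{supp}(f)\subset[y_0,R]$ with $R\ge S_1$, a condition guaranteed by the monotonicity of $S$ together with $S(0)\ge S_1$.

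Feeding these three facts into the mild form, and observing that by uniqueness of the ODE the characteristic of $\mathbb{U}_{V_u}$ starting at $S(s)$ at time $s$ reaches precisely $S(t)$ at time $t$, one concludes $\mathrm{supp}(u(t))\subset[y_0,S(t)]$. I expect the main subtlety to be the apparent circularity between the transport speed $V_u$ and the bound $S(t)$, which depends on $V_u$: this can be resolved by re-running the local contraction argument of Section~\ref{Sec3} inside the closed subset of $Z_T^\nu$ consisting of profiles whose support lies in $[y_0,S_{\bar u}(t)]$ (with $S_{\bar u}$ the characteristic associated with the frozen profile $\bar u$), showing this subset is invariant under $\Gamma$, and then identifying its fixed point with the unique global solution from Theorem~\ref{T1}; extending by continuation delivers the statement on all of $\R^+$.
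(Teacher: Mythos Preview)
Your strategy via the mild formulation and support propagation of each ingredient is viable but differs from the paper, and your resolution of the circularity has a gap. The paper argues directly: setting $P(t,y)=\int_y^\infty u(t,y')\,\rd y'$, it computes $\tfrac{\rd}{\rd t}\int_{S(t)}^\infty P(t,y)\,\rd y$; the transport contribution combines with $-S'(t)P(t,S(t))$ via integration by parts and the defining ODE for $S$, the $Q$-term vanishes identically beyond $S(t)$ by the hypothesis on $\eta$, and the $L$-term is bounded by a multiple of the quantity itself, so Gronwall with zero initial value gives the result without any fixed-point considerations.

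The gap in your proposal is the claimed $\Gamma$-invariance of $\{\bar u:\mathrm{supp}\,\bar u(t)\subset[y_0,S_{\bar u}(t)]\}$. Your support analysis yields $\mathrm{supp}\,\Gamma(\bar u)(t)\subset[y_0,S_{\bar u}(t)]$ (since $\Gamma(\bar u)$ is driven by the speed $V_{\bar u}$), but membership in your set requires containment in $[y_0,S_{\Gamma(\bar u)}(t)]$, and nothing forces $S_{\bar u}\le S_{\Gamma(\bar u)}$. The fix is simpler than re-running the double fixed point: Theorem~\ref{T1} already hands you the solution $u$, hence $V_u$ and the \emph{fixed} curve $S$. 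Freeze also the second slot of $Q$ and consider the affine map $F(w)(t)=\mathbb{U}_{V_u}(t,0)u^0+\int_0^t\mathbb{U}_{V_u}(t,s)\,Q[w(s),u(s)]\,\rd s$ on $C(J_T,E_0)$, whose unique fixed point is $u$. Since the gain part of $Q[w,u](y)$ vanishes for $y>S_1$ regardless of the support of $u$, and the loss part is supported in $\mathrm{supp}\,w$, the closed set $\{w:\mathrm{supp}\,w(t)\subset[y_0,S(t)]\}$ is $F$-invariant (using that $\mathbb{U}_{V_u}$ propagates support along the fixed $S$, as you correctly argued). Picard iteration from $w_0=0$ stays in this set and converges to $u$, giving the claim on $[0,T]$; then continue.
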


\begin{proof}
The proof follows along the lines of \cite[Lemma~2.4]{SW06}. Indeed, noticing that $S$ is well-defined on $\R^+$, since $\tau$ is a bounded and continuous function,  and defining $P \in C^1\big( \R^+, L_1(Y) \big)$ according to
$$
P(t,y) := \int_{y}^{\infty}  u(t,y')  \, \mathrm{d} y' \ ,    \quad y \in Y, ~ t \ge 0 \,,
$$
we note that
\begin{equation*}
\begin{split}
\dfrac{\partial}{\partial t} P(t,y) 
 & =  \int_{y}^{\infty}  \partial_t u (t, y') \, \mathrm{d} y'
\\ & = \frac{v(t)}{1 + \nu \|u(t)\|_{E_0}} \tau(y) u(t,y)  +  \int_{y}^{\infty}  L[u(t)] (y')  \, \mathrm{d} y'  
+\int_{y}^{\infty}   Q[u(t), u(t)](y') \, \mathrm{d} y' \, .
\end{split}
\end{equation*}
Since
\begin{align*}
 \int_{S(t)}^{\infty}  \int_{y}^{\infty}  Q[u(t),u(t)](y') \, \mathrm{d} y'  \, \mathrm{d} y   & =
\int_{S(t)}^{\infty} \int_{y \vee 2y_0}^{\infty}   \int_{y_0}^{y'-y_0}\eta(y'-z,z)u(t,y'-z)u(t,z)\, \mathrm{d}z \, \mathrm{d} y'  \, \mathrm{d} y
\\ & \quad - 2 \int_{S(t)}^{\infty} \int_{y}^{\infty} u(t,y') \int_{y_0}^{\infty}  \eta(z,y')u(t,z) \, \mathrm{d}z \, \mathrm{d} y'  \, \mathrm{d} y \,,
\end{align*}
we obtain from the assumption on $\eta$ that
$$
\int_{S(t)}^{\infty}  \int_{y}^{\infty}  Q[u(t),u(t)](y') \, \mathrm{d} y'  \, \mathrm{d} y  = 0\,,\quad t\ge 0\,.$$
Therefore, using the positivity of $u$, \eqref{int_kappa} (which is implied by \eqref{bin_split}, \eqref{mon_pres}), and the definition of $S$, we compute
\begin{equation*}
\begin{split}
\dfrac{\mathrm{d}}{\mathrm{d}t}    \int_{S(t)}^{\infty}  P(t,y) \, \mathrm{d} y & =  \int_{S(t)}^{\infty}  \dfrac{\partial}{\partial t}  P(t,y) \, \mathrm{d} y  -  S'(t)  P(t, S(t))
\\ & = \frac{-v(t)}{1+ \nu \|u(t)\|_{E_0}} \int_{S(t)}^{\infty} \tau(y) \, \partial_y P(t,y) \mathrm{d}y +  \int_{S(t)}^{\infty}  \int_{y}^{\infty}   L[u(t)]  (y') \, \mathrm{d} y' \, \mathrm{d} y \quad \quad \quad \qquad \qquad
\\ & \qquad - \frac{v(t)}{1+ \nu \|u(t)\|_{E_0}} \tau(S(t)) P(t,S(t))\\ 
 & =  \frac{v(t)}{1+ \nu \|u(t)\|_{E_0}}  \int_{S(t)}^{\infty} \tau'(y) P(t,y) \, \mathrm{d} y + \int_{S(t)}^{\infty}  \int_{y}^{\infty}   L[u(t)]  (y') \, \mathrm{d} y' \, \mathrm{d} y\\
&  \le  \|\tau'\|_{\infty} v(t) \int_{S(t)}^{\infty} P(t,y) \, \mathrm{d} y
 + 2 \int_{S(t)}^{\infty}  \int_{y}^{\infty}   \beta(y'')  u(t, y'')   \int_{y}^{y''}   \kappa(y', y'')  \, \mathrm{d} y' \, \mathrm{d} y'' \, \mathrm{d} y \qquad
\\  &  \leq  \|\tau'\|_{\infty} v(t) \int_{S(t)}^{\infty} P(t,y) \, \mathrm{d} y +  2 \|\beta\|_{\infty}  \int_{S(t)}^{\infty} P(t,y)  \, \mathrm{d} y \,.
\end{split}
\end{equation*}
Thus,  Gronwall's lemma along with 
$$
\int_{S(0)}^{\infty} P(0,y) \, \mathrm{d} y = 0
$$
implies
$$
\int_{S(t)}^{\infty} P(t,y) \, \mathrm{d} y = 0 \ , \quad t \ge 0
$$
guaranteeing that $u(t,\cdot)$ vanishes on the interval $(S(t), \infty)$ for each $t\ge 0$.
\end{proof}

%%%%%%%%%%%%%%%%%%%%%%%%%%%%%%%%%%%%%%%%%%%%%%%%%%%%%%%%
\section{Proof of Theorem~\ref{Tweak}}\label{Sec4}
%%%%%%%%%%%%%%%%%%%%%%%%%%%%%%%%%%%%%%%%%%%%%%%%%%%%%%%%

We shall prove Theorem~\ref{Tweak} for unbounded kernels and thus suppose the conditions sated therein. 
Recall that  $v^0 > 0$ and  $u^0 \in L_1^+(Y,y\mathrm{d}y)$. We fix an arbitrary $T>0$. 

We first construct a suitable bounded approximation of the unbounded kernels for which classical solutions exist according to Theorem~\ref{T1} and we show then that a cluster point exists that is a weak solution for the original unbounded kernels. This approach
follows along the lines of \cite{LW07} but requires extensions  particularly due to the polymer joining term. For this we borrow ideas from \cite{ELMP} (see also \cite{L00}) used on the coagulation-fragmentation equations.

\subsection{Approximation by Bounded Kernels}\label{approx}

Let us first observe that $u^0 \in L_1^+(Y, y\mathrm{d}y)$ implies that we can apply a refined version of the de la Vall\'ee-Poussin Theorem \cite{Hoan} guaranteeing the existence of non-negative, non-decreasing, and convex function $\Phi \in C^{\infty}(\mathbb{R}^+)$ with $\Phi(0)=0$ such that $\Phi'$ is concave and
\begin{equation}\label{lim_Phi'}
\lim_{r \rightarrow \infty} \Phi'(r) = \lim_{r \rightarrow \infty} \dfrac{\Phi(r)}{r} = \infty
\end{equation}
with
$$
\int_{y_0}^{\infty} \Phi(y)u^0(y)\mathrm{d}y < \infty \, .
$$
We may then choose a sequence $(u^0_n)_{n\in\N}$ of non-negative, smooth, and compactly supported functions such that
\begin{equation}
\label{Konvergenz_u_0_n}
u^0_n \rightarrow u^0 \quad \text{in} \quad L^+_1(Y) \qquad \text{and} \qquad \sup_{n \in \mathbb{N}}\int_{y_0}^\infty \Phi(y) u^0_n(y)\mathrm{d}y < \infty \, . 
\end{equation}   
Next, we use a mollifier argument to construct a sequence $(\tau_n)_{n\in\N}$ in $BUC^{\infty}([y_0,\infty))$ satisfying
\begin{equation}\label{tau2}
0 < \frac{\tau_0}{2} \leq \tau_n(y) \leq \tau_* y  \ , \quad y\ge y_0 \, ,
\end{equation} 
such that 
\begin{equation}\label{tau3}
\tau_n \to \tau\quad \text{ uniformly on compact subsets of }\ Y\,.
\end{equation} 
Moreover, we can choose a sequence $(\eta_n)_{n\in\N}$ in $BUC^{\infty}(Y \times Y)$ such that
\begin{equation}\label{Bedingung_eta_n}
\eta_n(y,z) = \eta_n(z,y) \leq K\big({ y^{\alpha}z^{\rho} + y^{\rho}z^{\alpha}}\big)\,,\quad y\,, z\in Y\,,
\end{equation}
with constants $K$, $\alpha$, and $\rho$ stemming from \eqref{Bedingung_eta} and  
\begin{equation}\label{Bedingung_eta_n2}
\eta_n(y,z) = 0\quad \text{for }\ (y,z)\ \text{with }\ y+z > R_n\,,\qquad 2y_0 <R_n \rightarrow \infty\,,
\end{equation}
and such that
\begin{equation}\label{Bedingung_eta_n3}
\eta_n \to \eta\quad \text{ uniformly on compact subsets of }\ Y\times Y\,.
\end{equation} 
For $n\in \N$ we put 
$$
S_n^0 := \sup\{y \in (y_0,\infty) \ : \ y \in \mathrm{supp}~u_n^0\} \ , 
$$
and
$$
H_n(T) := \phi_n^{-1} \left( \int_0^T \left(v_0 + \int_{y_0}^{\infty} y u_n^0(y) \mathrm{d}y + \lambda t \right)  \mathrm{d} t \right) 
\,,\qquad \phi_n(r):= \int_{\max\{S_n^0,R_n\}}^{r} \frac{\mathrm{d}z}{\tau_n(z)} \,$$
and then introduce
$$
\mathcal{S}_n(T) := \max\{\mathcal{S}_{n-1}(T), H_n(T), n\} \, , \quad n \ge 1\,,\qquad \mathcal{S}_{0}(T) := H_{0}(T) \ .
$$
Let $\mu_n := \mathbf{1}_{[y_0, \mathcal{S}_n(T)]}\mu$ and $\beta_n := \mathbf{1}_{[y_0, \mathcal{S}_n(T)]}\beta$ for $n\in \N$. Thus, Theorem~\ref{T1} ensures the existence of a global non-negative classical solution  
$$
(v_n,u_n) \in C^1 \big(\mathbb{R}^+, \mathbb{R} \times E_0 \big) \cap C \big(\mathbb{R}^+, \mathbb{R} \times E_1 \big)
$$ 
to \eqref{eqv}-\eqref{equ} when $(\tau,\mu, \beta, \eta, u^0)$ is replaced with $(\tau_n,\mu_n, \beta_n, \eta_n, u^0_n)$. Moreover, the construction of $\mathcal{S}_n$ together with Lemma~\ref{supp}, \eqref{v_0_u_0},  and $\beta>0$ imply
\begin{equation}\label{supp_u_n}
\mathrm{supp} \, u_n(t) \subset [y_0, \mathcal{S}_n(T)] =\mathrm{supp} \, \beta_n \ , \quad t \in [0,T] \, .
\end{equation}
From \eqref{monomererhaltend} and \eqref{Konvergenz_u_0_n} we have
\begin{equation}\label{u_n_in_E_0}
v_n(t) + \int_{y_0}^{\infty} y u_n(t,y) \mathrm{d}y + \int_{0}^{t}\int_{y_0}^{\infty} y \mu_n(y) u_n(s,y) \mathrm{d}y\mathrm{d}s\leq c(T)\,,\qquad t\in [0,T]\,,\quad n\in\N\,,
\end{equation}
where $c(T)$ is independent of  $n$. We shall use  in the following the notation
$$
L_n[u](y) :=  - (\mu_n(y) + \beta_n(y))  u(y)  +  2 \int_{y}^{\infty}  \beta_n(z)  \kappa(y,z)  u(z)    \mathrm{d}z\,,\quad y\in Y\,,
$$
and  
$$
Q_n[u](y) := \mathbf{1}_{[y>2y_0]}\int_{y_0}^{y-y_0}\eta_n(y-z,z)u(y-z)u(z)\mathrm{d}z - 2u(y)\int_{y_0}^{\infty}  \eta_n(z,y)u(z) \mathrm{d}z\,,\quad y\in Y\,.
$$
In order to deal with the bilinear polymer joining terms we adapt the ideas  from
\cite[Lemma~3.2]{ELMP} (on the coagulation-fragmentation equations) to our situation and derive some estimates on the moments
$$
M_{s,n}(t):=\int_{y_0}^{\infty} y^{s} u_n(t,y)  \mathrm{d} y\,,\quad t\in [0,T]\,,
$$
for $s > 0$ and $n\in \N$. Note that all moments are well-defined due to the compact support of $u_n(t,\cdot)$.

\begin{lemma}\label{Lemma_M2}
Let $\theta = \alpha + \rho \in (1,2]$ in \eqref{Bedingung_eta}  and recall that then \eqref{Bedingung_beta} is supposed to hold. There is a constant $C_M(T)$ independent of $n$ such that
\begin{equation} 
M_{2,n}(t) \leq  C_M(T)(1 + t^{-1/\zeta})  \,,   \qquad t \in [0,T]\,,\quad n\in\N \, .
\end{equation}
\end{lemma}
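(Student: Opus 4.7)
The plan is to derive and close a differential inequality for $M_{2,n}(t)$ via the weak formulation with $\varphi(y)=y^{2}$, using \eqref{Bedingung_beta} to obtain a super-linear dissipation that dominates the polymer-joining term whose order is only $\theta<1+\zeta$.

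Since $u_{n}(t,\cdot)$ is compactly supported and satisfies $u_{n}(t,y_{0})=0$, integrating the polymer equation against $y^{2}$ and applying \eqref{L_Test} and \eqref{tilde} gives
\begin{equation*}
\begin{split}
\frac{\rd}{\rd t}M_{2,n}(t)
= &\ 2V_{n}(t)\int_{y_{0}}^{\infty} y\,\tau_{n}(y)u_{n}(t,y)\,\rd y
-\int_{y_{0}}^{\infty} y^{2}\mu_{n}(y)u_{n}(t,y)\,\rd y \\
& + \int_{y_{0}}^{\infty} u_{n}(t,y)\beta_{n}(y)\Bigl(-y^{2}+2\int_{y_{0}}^{y} z^{2}\kappa(z,y)\,\rd z\Bigr)\rd y\\
& + 2\int_{y_{0}}^{\infty}\!\int_{y_{0}}^{\infty} yz\,\eta_{n}(y,z)u_{n}(t,y)u_{n}(t,z)\,\rd z\,\rd y\,,
\end{split}
\end{equation*}
with $V_{n}(t)=v_{n}(t)/(1+\nu\|u_{n}(t)\|_{0})$.

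I would then bound each contribution. Since $\tau_{n}(y)\le\tau_{*}y$ and $V_{n}(t)\le v_{n}(t)\le c(T)$ by \eqref{u_n_in_E_0}, the transport term is $\le C(T)M_{2,n}(t)$; the $\mu_n$-term is non-positive and dropped. Using $z\le y$ on $(y_0,y)$ and the second inequality in \eqref{Bedingung_beta}, $2\int_{y_0}^{y}z^2\kappa(z,y)\rd z\le y\cdot 2\int_{y_0}^{y}z\kappa(z,y)\rd z\le ay^2$, so the $\beta_{n}$-term is dominated by $-(1-a)\int y^2\beta_n u_n\,\rd y$. Invoking the support property \eqref{supp_u_n} (on which $\beta_n=\beta$) together with the first inequality in \eqref{Bedingung_beta} gives the further bound $-(1-a)B\,M_{2+\zeta,n}(t)$. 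Symmetrising in $(y,z)$ and inserting \eqref{Bedingung_eta_n} bounds the $Q$-term by $4K\,M_{1+\alpha,n}(t)M_{1+\rho,n}(t)$.

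Two H\"older interpolations now close the estimate. The factorisation $y^{1+s}u_{n}=(yu_{n})^{1-s}(y^{2}u_{n})^{s}$ yields $M_{1+s,n}\le M_{1,n}^{1-s}M_{2,n}^{s}$ for $s\in[0,1]$; applied with $s=\alpha$ and $s=\rho$ and combined with the uniform bound $M_{1,n}\le c(T)$ from \eqref{u_n_in_E_0}, the $Q$-estimate becomes $\le c_{2}(T)M_{2,n}(t)^{\theta}$. Similarly, $M_{2,n}\le M_{1,n}^{\zeta/(1+\zeta)}M_{2+\zeta,n}^{1/(1+\zeta)}$ gives $M_{2+\zeta,n}\ge c_{1}(T)M_{2,n}(t)^{1+\zeta}$. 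All together,
\begin{equation*}
\frac{\rd}{\rd t}M_{2,n}(t)\le C(T)\,M_{2,n}(t)+c_{2}(T)\,M_{2,n}(t)^{\theta}-c_{1}(T)\,M_{2,n}(t)^{1+\zeta}\,.
\end{equation*}
Since $1\le\theta<1+\zeta$, Young's inequality absorbs the first two terms into half of the dissipation, yielding
\begin{equation*}
\frac{\rd}{\rd t}M_{2,n}(t)\le -\tfrac{c_{1}(T)}{2}M_{2,n}(t)^{1+\zeta}+C'(T)\,.
\end{equation*}

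The last step is a comparison with the scalar ODE $\phi'=-\tfrac{c_{1}(T)}{2}\phi^{1+\zeta}+C'(T)$ started from $\phi(0)=+\infty$. Setting $K:=(4C'(T)/c_{1}(T))^{1/(1+\zeta)}$, as long as $\phi(t)\ge K$ one has $\phi'\le -\tfrac{c_{1}(T)}{4}\phi^{1+\zeta}$; integrating $(\phi^{-\zeta})'\ge c_{1}(T)\zeta/4$ from $0$ gives $\phi(t)\le(c_{1}(T)\zeta t/4)^{-1/\zeta}$ in that regime, and $\phi(t)\le K$ thereafter since $\phi$ is monotone decreasing. Standard ODE comparison then yields $M_{2,n}(t)\le \phi(t)\le K+(c_{1}(T)\zeta t/4)^{-1/\zeta}$, which is the claim after adjusting constants.

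The main obstacle is that $M_{2,n}(0)$ is \emph{not} uniformly bounded in $n$, because the approximations $u_n^{0}$ have compact but growing supports $[y_0,\mathcal{S}_n(T)]$; hence the whole estimate must be crafted to be independent of the initial second moment. This is precisely what the super-linear dissipation $-M_{2,n}^{1+\zeta}$ provides, and is the reason the bound exhibits the singular factor $t^{-1/\zeta}$ as $t\downarrow 0$. The secondary technical point is matching the exponents of the two H\"older interpolations so that the $Q$-term $M_{2,n}^{\theta}$ is strictly subcritical relative to the fragmentation dissipation $M_{2,n}^{1+\zeta}$, which is ensured by the hypothesis $\zeta>\theta-1$.
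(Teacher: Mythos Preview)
Your argument is correct and follows essentially the same strategy as the paper (itself modelled on \cite[Lemma~3.2]{ELMP}): test with $y^{2}$, extract the super-linear fragmentation dissipation $-(1-a)B\,M_{2+\zeta,n}$ via \eqref{Bedingung_beta} and \eqref{supp_u_n}, control the joining term by a sub-critical moment, and close with H\"older/Young to reach $\tfrac{\rd}{\rd t}M_{2,n}+c\,M_{2,n}^{1+\zeta}\le C$, whose $n$-independent supersolution $C_M(1+t^{-1/\zeta})$ gives the claim.

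The only noteworthy difference is in the bookkeeping for the $Q$-term. The paper uses the cruder symmetric bound $\eta_n\le 2K(y^{\theta}+z^{\theta})$ to get $\int y^{2}Q_n[u_n]\,\rd y\le c(T)\,M_{1+\theta,n}$, then interpolates $M_{1+\theta,n}$ \emph{directly between $M_{1,n}$ and $M_{2+\zeta,n}$} (via $M_{1+\theta,n}\le M_{1,n}^{(1+\zeta-\theta)/(1+\zeta)}M_{2+\zeta,n}^{\theta/(1+\zeta)}$) and absorbs it into the dissipation by Young \emph{before} converting $M_{2+\zeta,n}$ to $M_{2,n}^{1+\zeta}$. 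You instead keep the product form $\eta_n\le K(y^{\alpha}z^{\rho}+y^{\rho}z^{\alpha})$, obtain $4K\,M_{1+\alpha,n}M_{1+\rho,n}$, interpolate each factor between $M_{1,n}$ and $M_{2,n}$ to get $c\,M_{2,n}^{\theta}$, and only then compare with $M_{2,n}^{1+\zeta}$. Both routes are equivalent once Young is applied; yours has the mild advantage of making the sub-criticality condition $\theta<1+\zeta$ visible directly in the exponents of $M_{2,n}$, while the paper's route keeps the higher moment $M_{2+\zeta,n}$ in play one step longer. Your closing remarks on why the bound must be independent of $M_{2,n}(0)$ and hence singular at $t=0$ are exactly to the point.
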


\begin{proof}
As pointed out the proof  follows along the lines of \cite[Lemma~3.2]{ELMP}. Note that owing to \eqref{Bedingung_eta_n} we have 
$$
\eta_n(y,z) \leq  2K\big( { y^{\theta}  +  z^{\theta}} \big) \ ,  \quad (y,z) \in Y \times Y\,,
$$
and so it follows form \eqref{tilde} and \eqref{u_n_in_E_0}  for $t \in [0,T]$
\begin{equation*}
\begin{split}
\int_{y_0}^{\infty}  & y^2 Q_n[u_n(t)](y)  \mathrm{d} y 
\\ & \leq 2 K  \int_{y_0}^{\infty}  \int_{y_0}^{\infty}   ({ y^{\theta}  +  z^{\theta}})  ((y+z)^2 - y^2 -z^2) u_n(t,y) u_n(t,z) \mathrm{d} z   \mathrm{d} y 
\\ & =  8 K    \int_{y_0}^{\infty}  \int_{y_0}^{\infty}   { y^{1+\theta}}  z  u_n(t,y) u_n(t,z)   \mathrm{d} z   \mathrm{d} y  \,,
\end{split}
\end{equation*}
hence
\begin{equation*}
\begin{split}\label{Qf}
\int_{y_0}^{\infty}  & y^2 Q_n[u_n(t)](y)  \mathrm{d} y  \leq   c(T)\, M_{1+\theta,n}(t) \,  .
\end{split}
\end{equation*}
In addition,  \eqref{L_Test}, \eqref{Bedingung_beta},  \eqref{supp_u_n}, and the positivity of $\mu_n$, $\beta_n$ and $u_n$ imply for  $t \in [0,T]$
\begin{equation*}
\begin{split}
- \int_{y_0}^{\infty}    y^2 L_n[u_n(t)](y)  \mathrm{d} y 
& = \int_{y_0}^{\infty}   y^2 \big( \mu_n(y) +\beta_n(y)\big)\, u_n(t,y)  \mathrm{d} y  
\\ & \qquad - 2  \int_{y_0}^{\infty}   \beta_n(y) u_n(t,y)    \int_{y_0}^y  z^2 \kappa(z,y) \mathrm{d}  z \mathrm{d} y 
 \\ & \geq  (1-a)  B \int_{y_0}^{\infty}  { y^{2 +\zeta}}  u_n(t,y)  \mathrm{d} y \,,
\\
&=  (1-a) B  \, M_{2+\zeta,n}(t) \, . 
\end{split}
\end{equation*}
Next, using integration by parts we obtain from \eqref{tau_2}
\begin{equation*}
\begin{split}
\int_{y_0}^{\infty} y^2 \partial_y \big(\tau_n(y) u_n(t,y) \big) \mathrm{d}y 
 = y^2 \big( \tau_n(y) u_n(t,y) \big) \Big|_{y_0}^{\infty} - 2\int_{y_0}^{\infty}  y \tau_n(y) u_n(t,y) \mathrm{d}y 
\geq  -2\tau_* M_{2,n}(t)\,.
\end{split}
\end{equation*}
Therefore, integrating \eqref{equ} with respect to $y^2\rd y$  and using the above estimates and \eqref{u_n_in_E_0} we deduce
\begin{equation*}
\begin{split}%\label{m2}
\frac{\mathrm{d}M_{2,n}(t)}{\mathrm{d}t} & =
   -  \frac{v_n(t)}{1+\nu\|u_n(t)\|_0} \int_{y_0}^{\infty} y^2 \partial_y \big( \tau_n(y) u_n(t,y) \big) \mathrm{d}y\\
&\quad 	+ \int_{y_0}^{\infty} y^2 L_n [u_n(t)](y) \mathrm{d}y
	  + \int_{y_0}^{\infty} y^2 Q_n [u_n(t)](y) \mathrm{d}y 
\\ & \leq  c(T) M_{2,n}(t) - (1-a)B \, M_{2+\zeta,n}(t) + c(T) M_{1+\theta,n}(t) \,,
\end{split} 
\end{equation*} 
hence, since $1+\theta > 2$,
\begin{equation}
\label{dM2}
\frac{\mathrm{d}M_{2,n}(t)}{\mathrm{d}t} + c_1 \, M_{2+\zeta,n}(t) \leq c(T)M_{1+\theta,n}(t) \, , \quad t \in [0,T] \, . 
\end{equation}
Next, since $\zeta > \theta - 1$, H\"older's inequality and the fact that  $M_{1,n}(t) \leq c(T)$ by \eqref{u_n_in_E_0} imply
$$
 M_{1+\theta,n}(t)
\le   M_{1,n}^{\frac{1+\zeta-\theta}{1+\zeta}}(t) M_{2+\zeta,n}^{\frac{\theta}{1+\zeta}}(t)
\leq c(T) \, M_{2+\zeta,n}^{\frac{\theta}{1+\zeta}}(t) 
$$
%and \eqref{dM2} then becomes
%$$
%\frac{\mathrm{d}M_{2,n}(t)}{\mathrm{d}t} + c_1 \, M_{2+\zeta,n}(t) \leq c(T) %M_{2+\zeta,n}^{\frac{\theta}{1+\zeta}}(t)  \, , \quad t \in [0,T] \, .%
%$$
and plugging this into \eqref{dM2} and using Young's inequality (noticing that $\theta < 1+\zeta$) we derive
$$
\frac{\mathrm{d}M_{2,n}(t)}{\mathrm{d}t} + c_1(T) \, M_{2+\zeta,n}(t) \leq c(T) \, , \quad t \in [0,T] \, .
$$
Finally, using again H\"older's inequality and  \eqref{u_n_in_E_0} we get
$$
M_{2,n}(t) \leq M_{1,n}^{\frac{\zeta}{1+\zeta}}(t) M_{2+\zeta,n}^{\frac{1}{1+\zeta}}(t) \leq c(T)  M_{2+\zeta,n}^{\frac{1}{1+\zeta}} (t)
$$ 
and so
$$
\frac{\mathrm{d}M_{2,n}(t)}{\mathrm{d}t} + c_1(T)M_{2,n}^{1+\zeta}(t) \leq c(T)\,,\quad t\in [0,T] \, .
$$
The fact that the corresponding differential equation is solved by $t \mapsto C_M(1+t^{-\frac{1}{\zeta}})$ with $C_M$ only depending on $K, B, \theta, \zeta$, and $T$ yields the assertion.
\end{proof}

\begin{corollary}
\label{Lemma_Int_M2}
Let $\theta = \alpha + \rho \in (1,2]$ in \eqref{Bedingung_eta}  and assume \eqref{Bedingung_beta}. Then      
$$
 \int_{0}^{T} M_{\theta,n}(t) \mathrm{d} t \leq c(T) \,,\qquad t\in [0,T]\,,\quad n\in\N\,,
$$
with a constant $c(T)$ not depending on $n\in\N$.
\end{corollary}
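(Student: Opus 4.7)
The plan is to interpolate the moment $M_{\theta,n}(t)$ between the first moment (uniformly bounded in $t$ and $n$ by the monomer balance) and the second moment (controlled by Lemma~\ref{Lemma_M2} with an integrable time singularity at $t=0$), and then to integrate in time. Since $1 < \theta \leq 2$, H\"older's inequality applied with exponents $1/(2-\theta)$ and $1/(\theta-1)$ to the decomposition $y^{\theta} = y^{2-\theta} \cdot y^{2(\theta-1)}$ gives
$$
M_{\theta,n}(t) = \int_{y_0}^{\infty} y^{\theta} u_n(t,y)\,\mathrm{d}y \leq M_{1,n}(t)^{2-\theta} \, M_{2,n}(t)^{\theta-1}\,, \qquad t \in (0,T]\,.
$$

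The a priori bound \eqref{u_n_in_E_0} yields $M_{1,n}(t) \leq c(T)$ uniformly in $n$, and Lemma~\ref{Lemma_M2} provides $M_{2,n}(t) \leq C_M(T)\bigl(1+t^{-1/\zeta}\bigr)$. Combining these and using that $(1+t^{-1/\zeta})^{\theta-1} \leq c\,(1+t^{-(\theta-1)/\zeta})$, I would deduce
$$
M_{\theta,n}(t) \leq c(T)\, \bigl(1 + t^{-(\theta-1)/\zeta}\bigr)\,, \qquad t \in (0,T]\,,
$$
with a constant $c(T)$ independent of $n$.

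The remaining point, which is the only place any subtlety arises, is integrability of the singular factor at $t=0$. Here one invokes the standing assumption \eqref{Bedingung_beta} which requires $\zeta > \theta - 1$, equivalently $(\theta-1)/\zeta < 1$. Consequently $\int_0^T t^{-(\theta-1)/\zeta}\,\mathrm{d}t$ converges, and integrating the previous pointwise bound over $[0,T]$ yields $\int_0^T M_{\theta,n}(t)\,\mathrm{d}t \leq c(T)$ uniformly in $n$, as claimed. I do not anticipate a real obstacle beyond recognising that the de la Vall\'ee-Poussin type interplay between the interpolation exponent $\theta - 1$ and the dissipation exponent $\zeta$ in Lemma~\ref{Lemma_M2} is precisely calibrated by the hypothesis \eqref{Bedingung_beta}.
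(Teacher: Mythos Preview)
Your argument is correct and follows exactly the same route as the paper: interpolate $M_{\theta,n}$ between $M_{1,n}$ and $M_{2,n}$ via H\"older, bound $M_{1,n}$ by \eqref{u_n_in_E_0} and $M_{2,n}$ by Lemma~\ref{Lemma_M2}, and integrate the resulting $t^{-(\theta-1)/\zeta}$ singularity using $\zeta>\theta-1$ from \eqref{Bedingung_beta}.
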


\begin{proof}
Noticing that H\"older's inequality, \eqref{u_n_in_E_0}, and Lemma~\ref{Lemma_M2} imply
$$
  M_{\theta,n}(t) \leq M_{1,n}(t)^{2-\theta}\, M_{2,n}(t)^{\theta - 1}    \leq c(T)\, (1 + t^{-\frac{1}{\zeta}})^{\theta-1} 
\leq c(T)\, \left(1 + t^{-\frac{\theta-1}{\zeta}} \right)  
$$
for $t \in [0,T]$, the assertion follows since $\theta-1 < \zeta$.
\end{proof}

We next derive  a priori estimates which imply then later on the compactness of the sequence  $((v_n, u_n))_{n\in\N}$.

\begin{lemma}\label{L3}
There exists a constant $c(T)$ independent of $n$ such that
\begin{equation}\label{Phi_u_n}
\int_{y_0}^{\infty} \Phi(y) u_n(t,y) \mathrm{d}y \leq c(T) \, ,
\end{equation} 
\begin{equation}\label{I}
\int_{0}^{t} I_{1,n}(s) \mathrm{d}s +\int_{0}^{t} I_{2,n}(s) \mathrm{d}s \leq c(T) \, ,  
\end{equation} 
\begin{equation}\label{Phi_mu}
\int_{0}^{t} \int_{y_0}^{\infty} \Phi(y) \mu_n(y) u_n(s,y) \mathrm{d}y \mathrm{d}s \leq c(T) \, ,
\end{equation} 
for $t\in [0,T]$, where 
$$
I_{1,n}(s) := \int_{y_0}^{\infty} u_n(s,y) \beta_n(y) \int_{y_0}^{y} \left(  \dfrac{\Phi(y)}{y} - \dfrac{\Phi(z)}{z} \right) z \kappa(z,y) \mathrm{d}z \mathrm{d}y \, ,
$$
$$
I_{2,n}(s) := \int_{y_0}^{\infty} u_n(s,y) \beta_n(y) \dfrac{\Phi(y)}{y}  \int_{0}^{y_0}  z \kappa(z,y) \mathrm{d}z \mathrm{d}y \,.
$$
\end{lemma}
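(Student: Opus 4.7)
My strategy is to test the approximated polymer equation \eqref{equ} (with $\tau,\mu,\beta,\eta$ replaced by $\tau_n,\mu_n,\beta_n,\eta_n$) against $\varphi(y)=\Phi(y)$, producing a differential inequality for $t\mapsto \int_{y_0}^\infty \Phi(y) u_n(t,y)\,\mathrm{d}y$ from which all three estimates follow. This is justified since $u_n$ is classical by Theorem~\ref{T1} and $u_n(t,\cdot)$ has compact support in $Y$ by Lemma~\ref{supp}, so no boundary or integrability issues arise. I record three elementary properties of the de la Vall\'ee--Poussin function $\Phi$ that will be used throughout: (i) $r\mapsto \Phi(r)/r$ is non-decreasing on $(0,\infty)$, by convexity and $\Phi(0)=0$; (ii) $r\Phi'(r)\le 2\Phi(r)$ for $r\ge 0$, which follows from $\Phi'$ concave and $\Phi'(0)\ge 0$ via $\Phi(r)=r\int_0^1 \Phi'(tr)\mathrm{d}t\ge (r/2)\Phi'(r)$; and (iii) $\Phi(y+z)-\Phi(y)-\Phi(z)\le \min\bigl(z\Phi'(y),\, y\Phi'(z)\bigr)$ for $y,z>0$, since $s\mapsto \Phi'(y+s)-\Phi'(s)$ is non-increasing when $\Phi'$ is concave.

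\textbf{Key manipulations.} The polymerization contribution $V_n(t)\int \Phi'(y)\tau_n(y) u_n\,\mathrm{d}y$ is controlled by $c(T)\int \Phi u_n\,\mathrm{d}y$ using \eqref{tau2}, property (ii), and the a priori bound $V_n(t)\le v_n(t)\le c(T)$ inherited from \eqref{u_n_in_E_0}. For the $L_n$-term I apply identity \eqref{L_Test} and then invoke \eqref{mon_pres} to write $\Phi(y)=\tfrac{\Phi(y)}{y}\cdot 2\int_0^y z\kappa(z,y)\mathrm{d}z$ and split the $z$-integral at $y_0$, obtaining the algebraic identity
\[
-\Phi(y)+2\int_{y_0}^y \Phi(z)\kappa(z,y)\mathrm{d}z \;=\; -2\int_{y_0}^y \Big(\tfrac{\Phi(y)}{y}-\tfrac{\Phi(z)}{z}\Big) z\,\kappa(z,y)\mathrm{d}z \;-\; 2\int_0^{y_0}\tfrac{\Phi(y)}{y}z\,\kappa(z,y)\mathrm{d}z,
\]
both integrands being non-negative by (i). Consequently the $L_n$-contribution equals exactly $-\int \Phi\mu_n u_n\mathrm{d}y - 2I_{1,n}(t)-2I_{2,n}(t)$. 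For the $Q_n$-contribution, identity \eqref{tilde} combined with (iii), the symmetry of $\eta_n(y,z)u_n(y)u_n(z)$, the growth \eqref{Bedingung_eta_n}, and (ii) in the form $y^s\Phi'(y)\le 2 y_0^{s-1}\Phi(y)$ (valid for $s\in[0,1]$) leads to
\[
\int \Phi\, Q_n[u_n,u_n]\,\mathrm{d}y \;\le\; C\bigl(M_{1+\alpha,n}(t)+M_{1+\rho,n}(t)\bigr)\int_{y_0}^\infty \Phi(y) u_n(t,y)\,\mathrm{d}y.
\]

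\textbf{Closing the estimate.} Collecting the three contributions yields
\[
\frac{\mathrm{d}}{\mathrm{d}t}\int \Phi u_n\,\mathrm{d}y + 2\bigl(I_{1,n}+I_{2,n}\bigr)(t) + \int \Phi\mu_n u_n\,\mathrm{d}y \;\le\; c(T)\bigl(1+M_{1+\alpha,n}(t)+M_{1+\rho,n}(t)\bigr)\int \Phi u_n\,\mathrm{d}y.
\]
The main obstacle is to show that the coefficient on the right is integrable on $[0,T]$ uniformly in $n$, since $1+\alpha,1+\rho$ can exceed $1$ whereas \eqref{u_n_in_E_0} controls only moments of order $\le 1$. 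This is precisely the role of the stronger hypotheses for $\theta\in(1,2]$: Lemma~\ref{Lemma_M2} supplies the singular-but-integrable estimate $M_{2,n}(t)\le C_M(T)(1+t^{-1/\zeta})$ with $\zeta>\theta-1$, and H\"older interpolation against $M_{1,n}(t)\le c(T)$, together with Corollary~\ref{Lemma_Int_M2}, delivers $\int_0^T (M_{1+\alpha,n}+M_{1+\rho,n})(s)\,\mathrm{d}s\le c(T)$. For $\theta\le 1$, analogous interpolation using only \eqref{u_n_in_E_0} and $M_{0,n}\le M_{1,n}/y_0$ suffices since the bound on $\eta_n$ can be rearranged so that only moments of order $\le 1$ enter the estimate on $Q_n$. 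Gronwall's lemma combined with the uniform bound on $\int \Phi u_n^0\,\mathrm{d}y$ from \eqref{Konvergenz_u_0_n} then produces \eqref{Phi_u_n}, and integrating the differential inequality over $[0,t]$ while exploiting the non-negativity of $I_{1,n}$, $I_{2,n}$ and of $\int \Phi\mu_n u_n$ yields \eqref{I} and \eqref{Phi_mu}.
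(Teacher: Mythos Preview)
Your overall strategy---testing the approximate equation against $\Phi$, decomposing the $L_n$-contribution via \eqref{mon_pres} into $-2(I_{1,n}+I_{2,n})$ minus the $\mu_n$-term, bounding the transport term using $y\Phi'(y)\le 2\Phi(y)$, and closing by Gronwall---matches the paper's proof. The one place where your argument deviates, and where it breaks down, is the estimate of the polymer-joining contribution in the regime $\theta=\alpha+\rho\in(1,2]$.

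From your inequality (iii) together with the step $y^{s}\Phi'(y)\le 2y_0^{\,s-1}\Phi(y)$ you arrive at
\[
\int_{y_0}^{\infty}\Phi\,Q_n[u_n,u_n]\,\mathrm{d}y\ \le\ C\bigl(M_{1+\alpha,n}(t)+M_{1+\rho,n}(t)\bigr)\int_{y_0}^{\infty}\Phi\,u_n\,\mathrm{d}y,
\]
and you then claim that $\int_0^T\bigl(M_{1+\alpha,n}+M_{1+\rho,n}\bigr)\,\mathrm{d}s\le c(T)$ follows from Lemma~\ref{Lemma_M2} and Corollary~\ref{Lemma_Int_M2}. This is not justified in general: interpolating $M_{1+\rho,n}\le M_{1,n}^{\,1-\rho}M_{2,n}^{\,\rho}$ against the bound $M_{2,n}(t)\le C_M(T)(1+t^{-1/\zeta})$ produces an integrand of order $t^{-\rho/\zeta}$ near $t=0$, which is integrable only if $\zeta>\rho$. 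The hypothesis \eqref{Bedingung_beta} is merely $\zeta>\theta-1=\alpha+\rho-1$, which is strictly weaker than $\zeta>\rho$ whenever $\alpha<1$ (for instance $\alpha=0.5$, $\rho=0.9$, $\zeta=0.5$). Corollary~\ref{Lemma_Int_M2} controls only $\int_0^T M_{\theta,n}\,\mathrm{d}t$, and since $1+\rho\ge\theta$ with equality only when $\alpha=1$, this does not suffice.

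The paper avoids this by using the sharper inequality $\tilde\Phi(y,z)\le 2\bigl(z\Phi(y)+y\Phi(z)\bigr)/(y+z)$ from \cite[Lemma~A.2]{L}; the extra factor $(y+z)^{-1}$ is precisely what converts the weight $y^{\alpha}z^{\rho}$ into one that yields $M_{\theta,n}$ rather than $M_{1+\rho,n}$. Your own bound (iii) can in fact be pushed to the same conclusion: after symmetrizing to the region $\{y\ge z\}$, replace the crude step $y^{\alpha-1}\le y_0^{\,\alpha-1}$ by $y^{\alpha-1}\le z^{\alpha-1}$ (valid there since $\alpha\le1$); then
\[
z\,\Phi'(y)\,y^{\alpha}z^{\rho}\ \le\ 2\,z^{1+\rho}y^{\alpha-1}\Phi(y)\ \le\ 2\,z^{\theta}\Phi(y),
\]
and the Gronwall argument closes with the moment $M_{\theta,n}$, exactly as required by Corollary~\ref{Lemma_Int_M2}.
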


\begin{proof}
Recalling that $u_n(t,\cdot)$ is compactly supported we may test the corresponding equation \eqref{equ} with  $\Phi$ and obtain for $t \in [0,T]$ and $n\in \N$ on using \eqref{L_Test} and \eqref{tilde} that
\begin{align*}%\label{Abschaetzung_mit_Phi}
\int_{y_0}^{\infty}   \Phi(y) u_n(t,y)\mathrm{d}y    &  = \int_{y_0}^{\infty} \Phi(y) u^0_n(y)\mathrm{d}y+  \int_{0}^{t}  \frac{v_n(s)}{1+\nu\| u_n(s)\|_0}  \int_{y_0}^{\infty}  \Phi'(y) \tau_n(y) u_n(s,y)  \mathrm{d}y  \mathrm{d}s  \nonumber
\\ & \qquad + \int_{0}^{t} \int_{y_0}^{\infty} \int_{y_0}^{\infty} \tilde{\Phi}(y,z) \eta_n(y,z) u_n(y) u_n(z) \mathrm{d}z \mathrm{d}y \mathrm{d}s \nonumber
\\& \qquad  -  \int_{0}^{t}  \int_{y_0}^{\infty}  \Phi(y)   (\mu_n(y) + \beta_n(y)) u_n(s,y)\mathrm{d}y  \mathrm{d}s  \nonumber
\\& \qquad + 2  \int_{0}^{t} \int_{y_0}^{\infty} u_n(s,y) \beta_n(y) \int_{y_0}^{y}  \Phi(z)  \kappa(z,y) \mathrm{d}z\mathrm{d}y  \mathrm{d}s  \,,
\end{align*}  
where $\tilde{\Phi}(y,z) := \Phi(y+z) - \Phi(y) - \Phi(z)$ for $y,z \in Y$.  
 We may rewrite the last two integrals on the right hand side  using \eqref{mon_pres} to get
\begin{align}\label{Abschaetzung_mit_Phi_I}
\int_{y_0}^{\infty}   \Phi(y) u_n(t,y)\mathrm{d}y  
& =  \int_{y_0}^{\infty} \Phi(y) u^0_n(y)\mathrm{d}y+\int_{0}^{t} \frac{v_n(s)}{1+\nu\|u_n(s)\|_0}  \int_{y_0}^{\infty}  \Phi'(y) \tau_n(y) u_n(s,y)  \mathrm{d}y  \mathrm{d}s  \nonumber
\\& \qquad  -  \int_{0}^{t}  \int_{y_0}^{\infty}  \Phi(y)   \mu_n(y)  u_n(s,y)\mathrm{d}y  \mathrm{d}s  \nonumber
\\& \qquad + \int_{0}^{t} \int_{y_0}^{\infty} \int_{y_0}^{\infty} \tilde{\Phi}(y,z) \eta_n(y,z) u_n(s,y) u_n(s,z) \mathrm{d}z \mathrm{d}y \mathrm{d}s            \nonumber
\\& \qquad  -  2\int_{0}^{t} ( I_{1,n}(s)  +  I_{2,n}(s)) \mathrm{d} s  \,.              
\end{align} 
We  then argue as in \cite[Section 4]{LW07}. Clearly, the terms involving $\mu_n$ and $I_{2,n}(s)$  are non-negative. The convexity of  $\Phi$ and  $\Phi(0) = 0$ imply that the mapping $y \mapsto \Phi(y)/y$ is non-decreasing so that $I_{1,n}(s)$ is non-negative as well. On the other hand, the convexity of $\Phi'$ along with $\Phi'(0) \geq 0$  entails 
$$
-\Phi'(y) \leq \Phi'(0) -  \Phi'(y) \leq -y\Phi''(y)
$$
and integrating this inequality yields  $y\Phi'(y) \leq 2\Phi(y)$ for $y \in Y$. Hence, since  $\Phi' \geq 0$, we obtain from \eqref{tau_2}  and  \eqref{u_n_in_E_0} 
\begin{align}\label{Phi'}
\int_{0}^{t} \frac{v_n(s)}{1+\nu\|u_n(s)\|_0}  \int_{y_0}^{\infty}  \Phi'(y)  \tau_n(y)  u_n(s,y)  \mathrm{d}y  \mathrm{d}s 
& \leq c(T) \int_{0}^{t} \int_{y_0}^{\infty} y \Phi'(y) u_n(s,y) \mathrm{d}y \mathrm{d}s  \nonumber
\\ & \leq c(T) \int_{0}^{t} \int_{y_0}^{\infty} \Phi(y) u_n(s,y) \mathrm{d}y \mathrm{d}s\,.
\end{align}
To bound the integral term involving $\eta_n$  in \eqref{Abschaetzung_mit_Phi_I} we argue along the lines of \cite[Proposition 3.4]{ELMP}. As therein we first note that (since $\Phi$  is convex and non-decreasing together with \cite[Lemma A.2]{L}) 
\bqn\label{316}
0 \leq \tilde{\Phi}(y,z) \leq 2 \dfrac{z \Phi(y) + y\Phi(z)}{y+z} \ , \qquad (y,z) \in Y \times Y\,,
\eqn
which shows that the integral term involving $\eta_n$ is non-negative. Introducing
$$
\Psi(y,z) := \tilde{\Phi}(y,z){ y^{\alpha}z^{\rho}} \ , \quad (y,z) \in Y \times Y \,,
$$
we also obtain from \eqref{316} 
\begin{align*}
 { \Psi(y,z)    \leq 2 (z \Phi(y) + y\Phi(z)) }\ , 
\end{align*}
if $\theta = \alpha + \rho \leq 1$.  If $\theta \in (1,2]$ and $y \geq z$, then \eqref{316} implies
\begin{equation*}
\begin{split}
 \Psi(y,z) &  \leq    { 2    \dfrac{y  z^{\theta} \Phi(y) + y y^{\theta} \Phi(z)} {y+z} }   \le 2 (z^{\theta} \Phi(y) + y^{\theta} \Phi(z))  
\end{split}
\end{equation*}
while the case $y \le z$ is analogous. We set $\theta_1 := \max\{1, \theta\}$ and obtain from \eqref{Bedingung_eta_n} and the estimates on $\Psi$
\begin{equation}\label{op}
\begin{split}
 \int_{0}^{t} \int_{y_0}^{\infty} & \int_{y_0}^{\infty} \tilde{\Phi}(y,z) \eta_n(y,z) u_n(s,y) u_n(s,z) \mathrm{d}z \mathrm{d}y \mathrm{d} s 
\\ & \leq  2 K \int_{0}^{t} \int_{y_0}^{\infty} \int_{y_0}^{\infty} \Psi (y,z)  u_n(s,y) u_n(s,z) \mathrm{d}z \mathrm{d}y  \mathrm{d} s 
  \\ & \leq {  8K } \int_{0}^{t} M_{\theta_1,n}(s) \int_{y_0}^{\infty} \Phi(y) u_n(s,y) \mathrm{d}y  \mathrm{d} s \, .
%\\ & \leq  c(T)  \int_{y_0}^{\infty} \Phi(y) u_n(t,y) \mathrm{d}y,
\end{split}
\end{equation}
 From \eqref{u_n_in_E_0} and Corollary~\ref{Lemma_Int_M2} we know that $ \int_{0}^{T} M_{\theta_1,n}(s) \mathrm{d} s$  is bounded independent of $n$. Therefore, the estimates \eqref{Konvergenz_u_0_n}, \eqref{Phi'}, and \eqref{op} allow us to apply  Gronwall's inequality to
\eqref{Abschaetzung_mit_Phi_I} in order to deduce that
$$
\int_{y_0}^{\infty} \Phi(y) u_n(t,y) \mathrm{d}y +
\int_{0}^{t} I_{1,n}(s) \mathrm{d}s +\int_{0}^{t} I_{2,n}(s) \mathrm{d}s +
\int_{0}^{t} \int_{y_0}^{\infty} \Phi(y) \mu_n(y) u_n(s,y) \mathrm{d}y \mathrm{d}s \leq c(T) \, ,
$$ 
whence the claim.
\end{proof}

\subsection{Compactness}  \label{cluster}

The estimates stated in Lemma~\ref{L3} allow us to show the weak compactness of the sequence  $((v_n,u_n))_{n\in\N}$ following \cite{ELMP,LW07}.

\begin{proposition}\label{P2}
There is a weakly compact subset $K_T$ of $L_1(Y, y\mathrm{d}y)$ such that $u_n(t) \in K_T$ for $n \in\N$ and $0 \leq t \leq T$. Moreover,
\begin{equation}
\label{beta_u_n}
\int_{0}^{T} \int_{y_0}^{\infty} \beta_n(y) u_n(s,y)  \mathrm{d}y \mathrm{d}s \leq c(T)\,,\quad n\in\N\,,
\end{equation}
for some positive constant $c(T)$ independent of $n \in\N$.
\end{proposition}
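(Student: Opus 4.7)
The plan is to apply the Dunford--Pettis theorem to the family $\{u_n(t)\,:\,n\in\N,\,t\in[0,T]\}$ regarded as a subset of $L_1(Y,y\,\rd y)$. Three items must be verified: (i)~uniform boundedness in $L_1(Y,y\,\rd y)$, (ii)~tightness at infinity, and (iii)~equi-absolute continuity with respect to $y\,\rd y$ on bounded subsets of $Y$. Item~(i) is immediate from the monomer balance estimate \eqref{u_n_in_E_0}. For~(ii), given $\varepsilon>0$ I would pick $R>y_0$ so large that $y/\Phi(y)\le\varepsilon/c(T)$ for $y\ge R$, which is possible by \eqref{lim_Phi'}; then \eqref{Phi_u_n} yields
\[
\int_{R}^{\infty} y\, u_n(t,y)\,\rd y \,\le\, \frac{\varepsilon}{c(T)}\int_{y_0}^{\infty}\Phi(y)\,u_n(t,y)\,\rd y \,\le\, \varepsilon,
\]
uniformly in $n\in\N$ and $t\in[0,T]$.

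The crux, and in my view the main obstacle, is item~(iii): for every $R>y_0$ and $\varepsilon>0$ there must exist $\delta>0$ with $\int_E y\, u_n(t,y)\,\rd y<\varepsilon$ whenever $E\subset(y_0,R)$ satisfies $|E|<\delta$. The $\Phi$-moment bound \eqref{Phi_u_n} by itself does not suffice, since $\Phi$ is a weight on the independent variable $y$ rather than a superlinear function of the values of $u_n$, and thus only controls tightness at infinity. Following the approach of \cite{ELMP} (also employed in \cite{LW07}), I would exploit the time-integrated dissipation bound \eqref{I} on $I_{1,n}$ in tandem with the uniform continuity hypothesis \eqref{Bedingung_esssup_E}: the latter translates smallness of $|E|$ into smallness of $\beta(y)\int_{y_0}^{y}\mathbf{1}_E(z)\kappa(z,y)\,\rd z$ uniformly on $(y_0,R)$, while the former provides an averaged control of precisely such quantities weighted against $u_n$. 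Concretely, one tests the weak formulation of \eqref{equ} against an approximation of $\mathbf{1}_E$ and controls the resulting four contributions: the transport term by $\tau(y)\le\tau_*y$ on the bounded region, the $\mu_n$-term by \eqref{Phi_mu}, the fragmentation gain term by \eqref{Bedingung_esssup_E} combined with \eqref{I}, and the bilinear $Q_n$-term by \eqref{u_n_in_E_0} and Corollary~\ref{Lemma_Int_M2}. A Gronwall-type argument then delivers~(iii).

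For the second assertion \eqref{beta_u_n}, I would integrate the ordinary differential equation \eqref{eqv} over $[0,T]$: using \eqref{u_n_in_E_0} together with $\tau_n(y)\le\tau_* y$ to bound $\int_0^T v_n(s)\,p(u_n(s))\,\rd s$, and rearranging, one extracts
\[
\int_0^T\int_{y_0}^{\infty} u_n(s,y)\,\beta_n(y)\int_{0}^{y_0} z\,\kappa(z,y)\,\rd z\,\rd y\,\rd s \,\le\, c(T).
\]
When $\theta\in(1,2]$, the second inequality in \eqref{Bedingung_beta} combined with \eqref{mon_pres} gives the pointwise lower bound $\int_0^{y_0} z\,\kappa(z,y)\,\rd z\ge(1-a)y_0/2$, from which \eqref{beta_u_n} follows at once. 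When $\theta\le 1$, this pointwise lower bound may fail, and I would split $Y=(y_0,2y_1)\cup[2y_1,\infty)$: on the left piece \eqref{Bedingung_mu_beta} and $\|u_n(t)\|_{L_1(Y)}\le c(T)/y_0$ suffice; on the right piece, assumption \eqref{Bedingung_kappa} together with the symmetry of $\kappa$ produces a lower bound on $\int_0^{y-y_1} z\kappa(z,y)\,\rd z$ which, combined with the $I_{1,n}+I_{2,n}$ estimate \eqref{I} and the superlinearity $\Phi(y)/y\to\infty$, controls $\int\beta_n u_n$ on that region.
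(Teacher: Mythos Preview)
Your treatment of items~(i) and~(ii) is fine and matches the paper. The genuine gap is in item~(iii). You propose to ``test the weak formulation of \eqref{equ} against an approximation of $\mathbf{1}_E$'' and then say the transport term is controlled ``by $\tau(y)\le\tau_* y$ on the bounded region''. But the weak formulation contributes the term $\int_{y_0}^{\infty}\varphi'(y)\tau_n(y)u_n(s,y)\,\rd y$, and for $\varphi$ approximating $\mathbf{1}_E$ with $E\subset(y_0,R)$ the derivative $\varphi'$ is not controlled in $L_\infty$; in the limit it becomes a signed measure supported on $\partial E$, and the resulting expression involves point values of $u_n(s,\cdot)$, which are not bounded by any $L_1$-type quantity. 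The growth bound on $\tau$ is irrelevant here; the obstruction is $\varphi'$, and it cannot be removed by Gronwall.

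The paper circumvents this by abandoning the weak formulation for this step and using instead the mild (Duhamel) representation $u_n(t)=\mathbb{U}_n(t,0)u_n^0+\int_0^t\mathbb{U}_n(t,s)\big(L_n[u_n(s)]+Q_n[u_n(s)]\big)\,\rd s$, where $\mathbb{U}_n$ is the evolution operator generated by the transport part alone. The crucial input is then a lemma from \cite{W} stating that $\sup_{|E|\le\delta}\int_E\mathbb{U}_n(t,s)f\,\rd y\le\sup_{|F|\le\lambda_R(\delta)}\int_F f\,\rd y$ for $f\ge 0$, with $\lambda_R(\delta)\to 0$ as $\delta\to 0$; this encodes the bi-Lipschitz nature of the characteristic flow and is what replaces your failed transport estimate. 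With the transport thus absorbed, the fragmentation gain is handled via \eqref{Bedingung_esssup_E} and \eqref{I} (through an auxiliary inequality \eqref{Phi(S)_I}), the coagulation gain via translation invariance of Lebesgue measure, and a Gronwall argument on $\mathcal{E}_\delta^{n,R}(t):=\sup_{|E|\le\delta}\int_E u_n(t,y)\,\rd y$ closes the loop. Your outline for \eqref{beta_u_n} is workable (the case split on $\theta$ is unnecessary since \eqref{Bedingung_kappa} is always assumed, and the paper simply invokes \eqref{Bedingung_kappa}, \eqref{u_n_in_E_0}, and \eqref{I} directly), but the equi-integrability argument needs to be rebuilt along the lines above.
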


\begin{proof}
 Given $n \in \mathbb{N}$ and $t \in [0,T]$ it follows exactly as in \cite[Lemma 4.1]{LW07} that the properties of $\Phi$ and \eqref{Phi_u_n} imply
%$$
%\int_{R}^{\infty}  u_n(t,y) y \mathrm{d}y 
%\leq \sup_{z  > R}  \left\{  \dfrac{z}{\Phi(z)} \right\} %\int_{y_0}^{\infty}  \Phi(y) u_n(t,y) \mathrm{d}y \ .
%$$
% Dies impliziert  zusammen mit  (\ref{lim_Phi'})  und  %(\ref{Phi_u_n})  
\begin{equation}
\label{u_n_R_infty}
\lim_{R \rightarrow \infty}  \sup_{n  \in\N \atop  t \in [0,T]}  \int_{R}^{\infty}  u_n(t,y) y \mathrm{d} y  = 0   
\end{equation}
and, for $S>R> 2 y_0$ fixed,
%\\
%Da sowohl $\Phi$ als auch $y \mapsto \Phi(y) / y$ nichtfallende %Funktionen sind, erh\"alt man
%$$
%\dfrac{z}{y}\Phi(y) - \Phi(z)  \geq y_0\dfrac{\Phi(S)}{S}  - %\Phi(R) \ ,  \quad y_0 < z < R < S< y \ ,%
%$$
%und folglich
\begin{equation}
\label{Phi(S)_I}
\int_{S}^{\infty}  u_n(s,y) \beta_n(y)  \int_{y_0}^{R}  \kappa(z,y)  \mathrm{d} z  \mathrm{d} y  \leq \dfrac{1}{y_0\Phi(S)/S - \Phi(R)} I_{1,n}(s)\,,\quad s\in [0,T]\,.
\end{equation}
Given $\delta > 0$ we next define 
$$
\mathcal{E}_\delta^{n,R}(t):=\sup\,\left\{\int_{E} u_n(t,y)\mathrm{d}y \,;\, E \subset (y_0, R) \text{ measurable},  |E| \leq \delta\right\}
$$
and show that
\begin{equation}\label{delta1}
\lim_{\delta \rightarrow 0}  \sup_{n \in\N  \atop  t \in [0,T]} \mathcal{E}_\delta^{n,R}(t)=0\,.
\eqn   
Introducing
with $\mathbb{U}_{n}(t,s), 0 \leq s \leq t \leq \infty$, the (positive) evolution operator on $L_1(Y)$ corresponding to the operator $-A_{n}(t) :=  -V_{n}(t)\partial_y(\tau \cdot)$ with $V_{n}(t):=v_n(t)/(1+\nu\| u_n(t)\|_0)$ we first note that we can write $u_n$ in the form 
\begin{equation}\label{form}
u_n(t) = \mathbb{U}_{n}(t,0) u_n^0 + \int_0^t \mathbb{U}_{n}(t,s) (L_n[u_n(s)]  +  Q_n[u_n(s)]) \mathrm{d}s\,.
\end{equation}
Recall from \cite[Lemma 4.1]{W} that we have
\begin{equation}\label{E1b}
\sup_{E \subset (y_0, R), \atop  |E| \leq \delta} \int_{E} \mathbb{U}_{n}(t,s)f \, \mathrm{d}y \leq \sup_{F \subset (y_0, R), \atop |F| \leq \lambda_R^n(\delta)} \int_{F} f \, \mathrm{d}y
\end{equation}
for all $f \in L_1^+(Y)$, where
$$
\lambda_R^n(\delta) := \tau_*R \sup_{E \subset (y_0, R), \atop |E| \leq \delta} \int_{E} \frac{\mathrm{d}z}{\tau_n(z)} \,.
$$
Note that by \eqref{tau2} 
\begin{equation}\label{lambda_R}
\lambda_{R}^n(\delta) \leq \frac{2\tau_*R}{\tau_0} \delta  =: \lambda_R(\delta) \,.
\end{equation}
It then follows from \eqref{form}-\eqref{lambda_R} and the positivity of $u_n$ (i.e. neglecting negative contributions in \eqref{form}) that
\bqn\label{E3}
\begin{split}
\mathcal{E}_\delta^{n,R}(t) 
\,\leq\, \mathcal{E}_{\lambda_{R}(\delta)}^{n,R}(0)
&+ 2 \int_0^t \sup_{F \subset (y_0, R)  \atop |F| \leq \lambda_{R}(\delta)} \int_{y_0}^{\infty} u_n(s,y)\beta_n(y)\int_{y_0}^{y}  \mathbf{1}_{F}(z)\kappa(z,y)   \mathrm{d}z  \mathrm{d}y  \mathrm{d}s
\\
& +   \int_0^t \sup_{F \subset (2y_0, R)  \atop |F| \leq \lambda_{R}(\delta)}   \int_{F}\int_{y_0}^{y-y_0} \eta_n(y-z,z) u_n(s,y-z) u_n(s,z)  \mathrm{d}z  \mathrm{d}y  \mathrm{d}s \,.
\end{split}
\eqn
We now estimate the intergal terms on the right-hand side. First observe that, using \eqref{Phi(S)_I}, the second term on the right-hand side of \eqref{E3} can be bounded above as
\begin{equation} \label{hu}
2 \int_0^t \sup_{F \subset (y_0, R)  \atop |F| \leq \lambda_{R}(\delta)} \int_{y_0}^{\infty} u_n(s,y)\beta_n(y)\int_{y_0}^{y}  \mathbf{1}_{F}(z)\kappa(z,y)   \mathrm{d}z  \mathrm{d}y  \mathrm{d}s\le P(\delta, S)
\end{equation}
where
\begin{align*}
P(\delta, S) :=   2 \sup_{t \in [0,T] \atop n \in\N} \bigg\{ \int_0^t   \sup_{F  \subset (y_0, R)  \atop |F| \leq \lambda_R(\delta)}  
    \bigg( &\int_{y_0}^{S}  u_n(s,y) \beta_n(y)  \int_{y_0}^{R \land y}  \mathbf{1}_{F}(z) \kappa(z,y)   \mathrm{d}z  \mathrm{d}y  
\\   &    +    \dfrac{1}{ y_0 \Phi(S)/S   -  \Phi(R) }  I_{1,n}(s) \bigg)   \mathrm{d}s \bigg\} \,.
\end{align*}
As for the last term  on the right-hand side of \eqref{E3} we fix a measurable subset $F$ of $ (2y_0, R)$ with measure $|F| \leq \lambda_{R}(\delta)$ and $s\in [0,T]$. Then we deduce first from  \eqref{Bedingung_eta_n} and then from \eqref{u_n_in_E_0} along with the translation invariance of the Lebesgue measure that
\begin{equation*}
\begin{split}
\int_{F}\int_{y_0}^{y-y_0}  \eta_n(&y-z,z) u_n(s,y-z) u_n(s,z) \mathrm{d}z\mathrm{d}y
\\
&=  \int_{y_0}^{\infty} u_n(s,z) \int_{y_0}^{\infty} \mathbf{1}_{F}(y+z) \eta_n(y,z) u_n(s,y)  \mathrm{d}y\mathrm{d}z
\\
&  \leq c(R) \int_{y_0}^{\infty} u_n(s,z) \int_{y_0}^{\infty} \mathbf{1}_{-z+F}(y)  u_n(s,y)  \mathrm{d}y  \mathrm{d}z   
\\& 
\le  c(R,T)\, \mathcal{E}_{\lambda_{R}(\delta)}^{n,R}(s)\,.
\end{split}
\end{equation*}
Since clearly $\mathcal{E}_{\lambda_{R}(\delta)}^{n,R}(s)\le c(R) \,\mathcal{E}_{\delta}^{n,R}(s)$ due to the definition of $\lambda_{R}(\delta)$ we obtain
\begin{equation}\label{huu}
\begin{split}
 \int_0^t \sup_{F \subset (2y_0, R)  \atop |F| \leq \lambda_{R}(\delta)}   \int_{F}\int_{y_0}^{y-y_0} \eta_n(y-z,z) u_n(s,y-z) u_n(s,z)  \mathrm{d}z  \mathrm{d}y  \mathrm{d}s \le c(R,T) \int_0^t \mathcal{E}_{\delta}^{n,R}(s) \,\mathrm{d}s \,.
\end{split}
\end{equation}
Therefore, combining \eqref{E3}-\eqref{huu} we deduce that
\bqnn
\begin{split}
\mathcal{E}_\delta^{n,R}(t) 
\,\leq\, c(R)\,\mathcal{E}_{\delta}^{n,R}(0) + 
 P(\delta,S)+ c(R,T) \int_0^t \mathcal{E}_{\delta}^{n,R}(s)\, \mathrm{d}s
\end{split}
\eqnn
and hence, applying Gronwall's inequality observing that $\mathcal{E}_\delta^{n,R}\in C([0,T],\R^+)$,
\bqnn
\begin{split}
\mathcal{E}_\delta^{n,R}(t) 
\,\leq\, c(R,T)\,\big(\mathcal{E}_{\delta}^{n,R}(0) + 
 P(\delta,S)\big) \,,\qquad t\in [0,T]\,, \quad n\in\N\,,\quad \delta>0\,,
\end{split}
\eqnn
provided that $S>R>2y_0$. Noticing then
that, on the one hand, \eqref{Konvergenz_u_0_n} and the  Dunford-Pettis Theorem \cite[Theorem~4.21.2]{Edwards} imply
\bqnn
\lim_{\delta \rightarrow 0}\, \sup_{n \in\N}\, \mathcal{E}_{\delta}^{n,R}(0) =\lim_{\delta \rightarrow 0}\, \sup_{n \in\N}\, \sup_{F \subset (y_0, R)  \atop |F| \leq \delta}\int_{F} u^0_n(y)\mathrm{d}y = 0 \, ,
\eqnn
and, on the other hand, that  $P(\delta,S)$ can be made arbitrarily small  by choosing first $S>R$ large  and then $\delta$ small enough according to  \eqref{Bedingung_esssup_E}, \eqref{lim_Phi'}, \eqref{u_n_in_E_0}, and \eqref{I},  we deduce that
\bqn\label{DP1}
\lim_{\delta\to 0}\,\sup_{n\in\N}\, \mathcal{E}_\delta^{n,R}(t) =0\,,
\eqn
uniformly with respect to $t\in [0,T]$ and for every $R>y_0$. Combining \eqref{u_n_R_infty} and \eqref{DP1}, the existence of a weakly compact subset $K_T$ of $L_1(Y, y\mathrm{d}y)$ such that $u_n(t) \in K_T$ for $n \in\N$ and $0 \leq t \leq T$ is then a consequence of the Dunford-Pettis Theorem.

Finally, the estimate \eqref{beta_u_n}  is derived exactly as in  \cite[Lemma~4.1]{LW07} owing to assumptions \eqref{Bedingung_mu_beta}, \eqref{Bedingung_kappa},  the properties of $\Phi$ and the bounds \eqref{u_n_in_E_0}, \eqref{I}.
\end{proof}

\begin{lemma}\label{L4}
The family $\{u_n ; n \in\N\}$  is weakly equicontinuous in $L_1(Y, y \mathrm{d} y)$ at every  $t \in [0,T]$.
\end{lemma}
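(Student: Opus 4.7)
The plan is to verify, for each fixed $t \in [0,T]$, that $\lim_{s \to t} \sup_n \big|\int_{y_0}^\infty \phi(y)(u_n(t,y) - u_n(s,y))\,y\mathrm{d}y\big| = 0$ for every $\phi \in L_\infty(Y) = L_1(Y,y\mathrm{d}y)^*$. Since Proposition~\ref{P2} places $\{u_n(\sigma)\}_{n,\sigma\in[0,T]}$ inside a weakly compact, hence uniformly integrable, subset $K_T$ of $L_1(Y, y\mathrm{d}y)$, the tail bound $\sup_n\int_A^\infty u_n(\sigma,y)\,y\mathrm{d}y \to 0$ as $A \to \infty$ reduces the task to Lipschitz test functions $\varphi \in W^1_\infty(Y)$ with $\mathrm{supp}\,\varphi \subset [y_0,A]$ for some $A > 2y_0$; write $M := \|\varphi\|_{W^1_\infty}$. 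Applying Definition~\ref{D1} to each $(v_n,u_n)$ expresses $\int \varphi(u_n(t) - u_n(s))\mathrm{d}y$ as $\int_s^t(\mathcal{T}_n + \mathcal{R}_n + \mathcal{Q}_n)\mathrm{d}\sigma$, where $\mathcal{T}_n$ is the transport piece, $\mathcal{R}_n$ the $L_n$-contribution, and $\mathcal{Q}_n$ the $Q_n$-contribution; each has to be bounded uniformly in $n$ by a quantity vanishing as $s\to t$.

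For the easy terms, $|\mathcal{T}_n(\sigma)| \leq V_n(\sigma)\cdot M\tau_* M_{1,n}(\sigma) \leq c(T,\varphi)$ uses $V_n \leq v_n \leq c(T)$ (from the monomer balance \eqref{u_n_in_E_0}) together with $\tau_n(y) \leq \tau_* y$. The $\mu_n$- and $\beta_n$-loss parts of $\mathcal{R}_n$ are supported in $[y_0, A]$, where \eqref{Bedingung_mu_beta} gives local boundedness, so they are controlled by $c(T,\varphi) M_{0,n}(\sigma) \leq c(T,\varphi)$. The bilinear term $\mathcal{Q}_n$ is bounded via $|\varphi(y+z) - \varphi(y) - \varphi(z)| \leq 3M$ and \eqref{Bedingung_eta_n} by $6KM\cdot M_{\alpha,n}(\sigma)M_{\rho,n}(\sigma)$, and since $0 \leq \alpha \leq \rho \leq 1$ H\"older's inequality interpolates both moments between $M_{0,n}$ and $M_{1,n}$, yielding a uniform constant. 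All these contributions together produce a term of size $c(T,\varphi)|t-s|$.

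The delicate piece is the $\beta_n$-gain part of $\mathcal{R}_n$, namely $2\int u_n(\sigma,y)\beta_n(y)\int_{y_0}^y \varphi(z)\kappa(z,y)\mathrm{d}z\mathrm{d}y$, because $\beta$ need not be globally bounded. Split the $y$-integration at an auxiliary threshold $S > A$. On $(y_0, S]$, the local boundedness of $\beta$ yields a uniform bound $c(T,\varphi,S)$. On $(S,\infty)$, the inner integral collapses to $\int_{y_0}^A \varphi\kappa \leq M\int_{y_0}^A \kappa$, so \eqref{Phi(S)_I} (derived in the proof of Proposition~\ref{P2} with $R=A$) bounds this tail by $\frac{M}{y_0\Phi(S)/S - \Phi(A)} I_{1,n}(\sigma)$. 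Integrating over $[s,t]$ and using $\int_0^T I_{1,n}(\sigma)\mathrm{d}\sigma \leq c(T)$ from \eqref{I}, the tail contributes at most $\frac{Mc(T)}{y_0\Phi(S)/S - \Phi(A)}$, independent of $s,t,n$. Given $\varepsilon > 0$, first invoke the superlinear growth \eqref{lim_Phi'} to choose $S$ large so this tail is $<\varepsilon/2$; then choose $\delta > 0$ so that the combined bounded-in-$\sigma$ terms give less than $\varepsilon/2$ for $|t-s|<\delta$. The main obstacle is precisely this tail: the $L_1(0,T)$ bound on $\int u_n \beta_n\mathrm{d}y$ from \eqref{beta_u_n} alone is insufficient for equicontinuity, and one must convert mass at large $y$ into smallness via the refined $\Phi$-weighted estimate \eqref{I} from Lemma~\ref{L3}.
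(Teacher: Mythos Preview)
Your argument is correct and follows precisely the route the paper indicates: it invokes the weak compactness from Proposition~\ref{P2}, the bound \eqref{u_n_in_E_0}, and the tail estimate \eqref{Phi(S)_I} together with \eqref{I} to control the only dangerous term (the $\beta_n$-gain contribution at large~$y$), exactly as in part~(iii) of the proof of \cite[Theorem~4.3]{SW06}. The only step you pass over quickly is the reduction from general $\phi\in L_\infty$ to compactly supported $\varphi\in W^1_\infty$; this is justified by the weak compactness of $K_T$ (the weak topology on $K_T$ coincides with the one induced by $C_c^\infty$ since the latter separates points), so no gap.
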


\begin{proof}
This follows along the lines of part (iii) of the proof of \cite[Theorem 4.3]{SW06} by using \eqref{u_n_in_E_0}, \eqref{Phi(S)_I}, and the weak compactness of $\{u_n(t)\,;\,t\in[0,T]\,,\, n\in\N\}$ in $L_1(Y,y\rd y)$ shown in Proposition~\ref{P2}.
\end{proof}

\begin{lemma}\label{L5}
The family $\{v_n ; n \in\N\}$ is relatively compact in $C([0,T])$.
\end{lemma}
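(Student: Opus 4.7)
\medskip
\noindent\emph{Proof plan.} I would apply the Arzelà--Ascoli theorem to $(v_n)$ in $C([0,T])$: uniform boundedness is immediate from the monomer-balance estimate \eqref{u_n_in_E_0}, so the only real work is in establishing equicontinuity. To that end, I would write $v_n(t)-v_n(s)=\int_s^t v_n'(\sigma)\,\rd\sigma$ and decompose $v_n'$ according to the four terms on the right-hand side of \eqref{eqv} for the approximating system: the constant $\lambda$, the decay $-\gamma v_n$, the polymerization sink $-v_n\, p_n(u_n)$ with $p_n(u):=(1+\nu\|u\|_0)^{-1}\int\tau_n u\,\rd y$, and the monomer source $g_n(u_n(s))=2\int u_n(s,y)\beta_n(y)\int_0^{y_0}z\kappa(z,y)\,\rd z\,\rd y$.

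The first three terms are uniformly bounded in $n$ and $t\in[0,T]$: $\lambda$ trivially, $\gamma v_n\le c(T)$ from \eqref{u_n_in_E_0}, and for $v_n\, p_n(u_n)$ the linear growth bound $\tau_n(y)\le\tau_*\, y$ from \eqref{tau2} together with \eqref{u_n_in_E_0} yields $\int\tau_n u_n\,\rd y\le c(T)$, hence $v_n\, p_n(u_n)\le c(T)$. Their joint contribution to $|v_n(t)-v_n(s)|$ is therefore $O(|t-s|)$ uniformly in $n$.

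The source term $g_n(u_n)$ is the main obstacle, because $\beta$ is only locally bounded and the only a priori information is the $L^1$-in-time bound \eqref{beta_u_n}; no uniform pointwise bound is available. My plan is to show uniform integrability of $g_n(u_n)$ in $L^1(0,T)$ by splitting the $y$-integral at a cut-off $R>y_0$. On $[y_0,R]$, local boundedness of $\beta$, together with $\int_0^{y_0}z\kappa(z,y)\,\rd z\le y_0$ and $\int_{y_0}^R u_n(s,y)\,\rd y\le c(T)/y_0$, yields a pointwise contribution of order $c(R,T)$. On $(R,\infty)$, the convexity of $\Phi$ with $\Phi(0)=0$ makes $y\mapsto\Phi(y)/y$ non-decreasing and, by \eqref{lim_Phi'}, divergent at infinity; multiplying and dividing by $\Phi(y)/y$ then gives
\begin{equation*}
\int_R^{\infty} u_n(s,y)\beta_n(y)\int_0^{y_0} z\kappa(z,y)\,\rd z\,\rd y\;\le\;\frac{R}{\Phi(R)}\,I_{2,n}(s).
\end{equation*}
Integrating over $\sigma\in[s,t]$ and invoking the bound \eqref{I} on $\int_0^T I_{2,n}$ produces $\int_s^t g_n(u_n(\sigma))\,\rd\sigma\le c(R,T)\,|t-s|+\tfrac{R}{\Phi(R)}\,c(T)$. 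Given $\varepsilon>0$, one first picks $R$ so large that $\tfrac{R}{\Phi(R)}\,c(T)<\varepsilon/2$ and then $|t-s|$ small enough to make the other contribution $<\varepsilon/2$; this yields equicontinuity uniformly in $n$ and completes the Arzelà--Ascoli argument. The essential quantitative ingredient is thus the de la Vallée--Poussin function $\Phi$ provided by Lemma~\ref{L3}, which is precisely what compensates for the unboundedness of $\beta$ and turns the $L^1$-in-time bound on $I_{2,n}$ into equicontinuity of $(v_n)$.
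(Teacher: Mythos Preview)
Your argument is correct. The paper's own proof is a one-line reference to \cite[Lemma~4.3]{LW07}, noting only that one tests the $u_n$-equation \eqref{equ} with $\varphi(y)=y$, observes $\int_{y_0}^\infty y\,Q_n[u_n](y)\,\rd y=0$, and invokes Proposition~\ref{P2}. Your route is the same idea organized slightly differently: you work directly with $v_n'$ from \eqref{eqv} rather than passing through the evolution of $\|u_n(t)\|_0$ and the monomer balance law. The decisive ingredient in both cases is the control of the monomer-source term $g_n(u_n)$ for large polymer sizes via the de~la~Vall\'ee--Poussin function~$\Phi$; you make this explicit through the bound $\int_0^T I_{2,n}(s)\,\rd s\le c(T)$ from Lemma~\ref{L3}, which is exactly what the splitting at level $R$ needs. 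Your direct approach has the mild bonus of never introducing the $\mu_n$-integral (absent from \eqref{eqv}), whereas the balance-law route would additionally have to handle $\int_s^t\!\int_{y_0}^\infty y\,\mu_n(y)\,u_n(\sigma,y)\,\rd y\,\rd\sigma$ by an analogous splitting using~\eqref{Phi_mu}.
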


\begin{proof}
This is a consequence of  Proposition~\ref{P2} and can be shown exactly as in \cite[Lemma 4.3]{LW07} by testing the truncated equation \eqref{equ} by $\varphi(y)=y$ and additionally observing that
$$\int_{y_0}^{\infty}  Q_n[u_n](y) y \mathrm{d} y = 0 \, .$$
\end{proof}

\subsection{Proof of Theorem~\ref{Tweak}}

We are now in a position to prove Theorem~\ref{Tweak}. It follows from Proposition~\ref{P2}, Lemma~\ref{L4}, Lemma~\ref{L5}, and a variant of the Arzel\`{a}-Ascoli Theorem \cite[Theorem~1.3.2]{Vrabie} that there are subsequences (not relabeled)
 $(v_n)$, $(u_n)$   and functions $v  \in C(\mathbb{R}^+)$, $u \in C \big( \mathbb{R}^+, L_{1,\mathrm{w}} (Y, y \mathrm{d}y)  \big) $ such that
\begin{align}
&v_n  \rightarrow v   \quad  \text{in}  \quad C([0,T]) \, ,\label{vv1}\\
&u_n  \rightarrow  u  \quad  \text{in}   \quad  C \big([0,T],  L_{1,\mathrm{w}} (Y, y \mathrm{d}y) \big) \label{uu1}
\end{align}
for each $T > 0$. In addition, $v(t) \geq 0$ and $u(t)  \geq  0$. It remains to show that $(v,u)$ is a weak solution to \eqref{eqv}-\eqref{equ}. Since $(v_n,u_n)$ satisfies the weak formulation given in  Definition~\ref{D1} we pass to the limit in each of the corresponding terms. This is rather standard by now and except for the bilinear polymer joining terms similar to \cite{LW07}. Indeed, using Fatou's Lemma we infer from   \eqref{beta_u_n} and \eqref{uu1} that
\begin{equation}
\label{beta_u}
[(t,y)  \mapsto  \beta(y)  u(t,y)]    \in   L_1\big( (0,T)  \times Y \big) \, ,
\end{equation} 
while \eqref{uu1} and \eqref{Bedingung_eta} clearly imply that
\begin{equation} \label{eta_infty}
[(t,y,z)  \mapsto \eta(y,z) u(t,z)  u(t,y)]\in   L_1\big( (0,T)  \times Y \times Y\big) \, . 
\end{equation}  
Also, \eqref{lim_Phi'}, \eqref{Phi_mu}, and \eqref{uu1}  ensure that
 \begin{equation}\label{lim_mu}
\lim_{n \rightarrow  \infty}  \int_{0}^{t}   \int_{y_0}^{\infty}    y \mu_n(y)   u_n(s,y)   \mathrm{d}  y    \mathrm{d}  s
=   \int_{0}^{t}   \int_{y_0}^{\infty}    y \mu(y)   u(s,y)   \mathrm{d}  y    \mathrm{d}  s   < \infty 
\end{equation}
for any fixed $t\in [0,T]$.
For $\varphi \in W_{\infty}^{1}(Y)$ it  follows then from \eqref{tau_2}, \eqref{tau2}, \eqref{tau3}, \eqref{u_n_R_infty}, \eqref{vv1}, \eqref{uu1} that
\begin{equation}\label{schwache_Konvergenz_2}
\begin{split}
\lim_{n \rightarrow \infty} \int_0^t \frac{v_n(s)}{1 + \nu \| u_n(s)\|_{0}}& \int_{y_0}^{\infty} \varphi'(y) \tau_n(y) u_n(s,y) \mathrm{d}y \mathrm{d}s \\
& =\int_0^t \frac{v(s)}{1 + \nu \| u(s)\|_{0}} \int_{y_0}^{\infty} \varphi'(y) \tau(y) u(s,y) \mathrm{d}y \mathrm{d}s \ .
\end{split}
\end{equation}
Therefore, using \eqref{int_kappa}, \eqref{Bedingung_mu_beta}, \eqref{lim_Phi'}, \eqref{I}, \eqref{Phi(S)_I}, \eqref{uu1}, and \eqref{beta_u}
it readily follows that
\begin{equation}\label{t6}
\begin{split}
\lim_{n \rightarrow \infty} \int_{0}^{t} \int_{y_0}^{\infty}  \varphi(y)  \mu_n(y)  u_n(s,y) \, \mathrm{d} y \mathrm{d}s=\int_{0}^{t} \int_{y_0}^{\infty}  \varphi(y)  \mu(y)  u(s,y) \, \mathrm{d} y \mathrm{d}s
\end{split}
\end{equation}
and
\begin{equation}\label{t7}
\begin{split}
\lim_{n \rightarrow \infty}\int_{0}^{t} \int_{y_0}^{\infty}  u_n(s,y)  &\beta_n(y)  \left(   -\varphi(y)  +  2 \int_{y_0}^{y}  \varphi(z)  \kappa(z,y)   \, \mathrm{d} z  \right)   \, \mathrm{d} y \mathrm{d}s\\
&
=\int_{0}^{t} \int_{y_0}^{\infty}  u(y)  \beta(y)  \left(   -\varphi(y)  +  2 \int_{y_0}^{y}  \varphi(z)  \kappa(z,y)   \, \mathrm{d} z  \right)   \, \mathrm{d} y \mathrm{d}s
\end{split}
\end{equation}
for any compactly supported test function  $\varphi \in W_{\infty}^{1}(Y)$, say with support $[y_0,R]$, by observing that $\mu_n=\mu$ and $\beta_n=\beta$ on $[y_0,R]$ when $n$ is so large that $\mathcal{S}_n(T)>R$ (see \eqref{supp_u_n}). For such a test function $\varphi$ one then also shows based on \eqref{Bedingung_eta}, \eqref{Bedingung_eta_n3}, \eqref{u_n_in_E_0}, \eqref{u_n_R_infty}, and \eqref{uu1} that
\begin{equation}\label{t8}
\begin{split}
\lim_{n \rightarrow \infty} \int_{0}^{t} \int_{y_0}^{\infty} \int_{y_0}^{\infty}  \varphi(y+z) & \eta_n(y,z) u_n(s,z) u_n(s,y) \, \mathrm{d} z\mathrm{d} y \mathrm{d}s\\
&=\int_{0}^{t} \int_{y_0}^{\infty} \int_{y_0}^{\infty}  \varphi(y+z)  \eta(y,z) u(s,z) u(s,y) \, \mathrm{d} z\mathrm{d} y \mathrm{d}s
\end{split}
\end{equation}
and
\begin{equation}\label{t9}
\begin{split}
\lim_{n \rightarrow \infty} \int_{0}^{t} \int_{y_0}^{\infty} \int_{y_0}^{\infty}  \varphi(y) & \eta_n(y,z) u_n(s,z) u_n(s,y) \, \mathrm{d} z\mathrm{d} y \mathrm{d}s\\
&=\int_{0}^{t} \int_{y_0}^{\infty} \int_{y_0}^{\infty}  \varphi(y)  \eta(y,z) u(s,z) u(s,y) \, \mathrm{d} z\mathrm{d} y \mathrm{d}s\,.
\end{split}
\end{equation}
 A classical truncation argument along with \eqref{beta_u}-\eqref{lim_mu} then entails that \eqref{t6}-\eqref{t9} hold true for any test function  $\varphi \in W_{\infty}^{1}(Y)$. Consequently, $u$ satisfies the weak formulation and it similarly follows from \eqref{I},  \eqref{u_n_in_E_0}, \eqref{vv1}, \eqref{uu1} that $v$ satisfies equation \eqref{eqv} and $v(t)>0$, $t\in [0,T]$.

Finally, \eqref{vv1}, \eqref{uu1}, and \eqref{lim_mu} guarantee that \eqref{monomererhaltend} also holds for $(v,u)$. This proves Theorem~\ref{Tweak}.

\subsection{Proof of Proposition~\ref{C1}}

Let now $\theta=\alpha+\rho\le 1$ and suppose that $u^0 \in L_1^+(Y,y^{\sigma} \mathrm{d}y)$  for some $\sigma\ge 1$. Let $t \in [0,T]$. Since $u_n(t,\cdot)$ is compactly supported we may test \eqref{equ} by $\varphi(y)=y^\sigma$ and obtain from \eqref{L_Test} and \eqref{tilde}
\begin{equation*}
\begin{split}\label{Abschaetzung_mit_sigma}
\dfrac{\mathrm{d}}{\mathrm{d} t} \int_{y_0}^{\infty}   y^\sigma u_n(t,y)\mathrm{d}y \nonumber
& = \sigma  v_n(t)  \int_{y_0}^{\infty}  y^{\sigma-1} \tau_n(y) u_n(t,y)  \mathrm{d}y    
  -   \int_{y_0}^{\infty}  y^\sigma   \big(\mu_n(y) + \beta_n(y)\big) u_n(t,y)\mathrm{d}y  
 \\   & \quad + 2 \int_{y_0}^{\infty} u_n(s,y) \beta_n(y) \int_{y_0}^{y}  z^\sigma  \kappa(z,y) \mathrm{d}z\mathrm{d}y  
 \\ & \quad +   \int_{y_0}^{\infty} \int_{y_0}^{\infty} \big((y+z)^\sigma -  y^\sigma -  z^\sigma \big)   \eta_n(y,z)u_n(t,y)u_n(t,z) \mathrm{d}z \mathrm{d}y  \,.                                                              
\end{split}
\end{equation*} 
Note that \eqref{mon_pres} entails 
$$
2\int_{y_0}^{y} z^{\sigma} \kappa(z,y) \mathrm{d}z \leq y^{\sigma} \,,\quad y>y_0\,,
$$
while \eqref{Bedingung_eta_n} implies
$$
\big((y+z)^\sigma -  y^\sigma -  z^\sigma\big)\,\eta_n(y,z) \le c(\sigma)\, y^{\sigma-1} z\, \,\eta_n(y,z)\le c\, y^\sigma z\,,\quad y_0\le z\le y\,,
$$
so that, according to \eqref{u_n_in_E_0},
\begin{equation*}
\begin{split}
 & \int_{y_0}^{\infty}  \int_{y_0}^{\infty}  ((y+z)^\sigma -  y^\sigma -  z^\sigma)   \eta_n(y,z)u_n(t,y)u_n(t,z) \mathrm{d}z \mathrm{d}y 
 \leq c(T) \int_{y_0}^{\infty}  y^{\sigma} u_n(t,y) \mathrm{d}y \,.
\end{split}
\end{equation*}
Hence we derive from \eqref{tau2} that
\begin{align*}
\label{Abschaetzung_mit_sigma}
\dfrac{\mathrm{d}}{\mathrm{d} t} \int_{y_0}^{\infty}   y^\sigma u_n(t,y)\mathrm{d}y 
 \leq c(T) \int_{y_0}^{\infty}  y^{\sigma} u_n(t,y) \mathrm{d}y \ , 
\end{align*} 
and consequently
\bqnn
\label{Norm_u_n_sigma}
\|u_n(t)\|_{L_1(Y,y^\sigma \mathrm{d}y)} \leq c(T) \, , \qquad t \in [0,T] \,,\quad n\in\N\,.
\eqnn
Since this estimate is preserved for $u$ due to \eqref{uu1}, Proposition~\ref{C1} follows.

%%%%%%%%%%%%%%%%%%%%%%%%%%%%%%%%%%%%%%%%%%%%%%%%%%%%%%%%%%%%%%%%%%%%
%%%%%%%%%%%%%%%%%%%%%%%%%%%%%%%%%%%%%%%%%%%%%%%%%%%%%%%%%%%%%%%%%%%%
%\section*{Acknowledgments}
%%%%%%%%%%%%%%%%%%%%%%%%%%%%%%%%%%%%%%%%%%%%%%%%%%%%%%%%
%%%%%%%%%%%%%%%%%%%%%%%%%%%%%%%%%%%%%%%%%%%%%%%%%%%%%%%%

%%%%%%%%%%%%%%%%%%%%%%%%%%%%%%%
%%%%%%%%%%%%%%%%%%%%%%%%%%%%%%%

%%%%%%%%%%%%%%%%%%%%%%%%%%%%%%%
%%%%%%%%%%%%%%%%%%%%%%%%%%%%%%%


\begin{thebibliography}{99.}%


\bibitem{CLDDMP} V. Calvez, N. Lenuzza, M. Doumic, J.-P. Deslys, F. Mouthon, B. Perthame.
\emph{Prion dynamics with size dependency-strain phenomena.}
J. Biol. Dyn. \textbf{4} (2010), no. 1, 28--42.

\bibitem{CLODLMP} V. Calvez, N. Lenuzza, D. Oelz, J.-P. Deslys, P. Laurent, F. Mouthon, B. Perthame.
\emph{Size distribution dependence of prion aggregates infectivity.}
Math. Biosci. \textbf{217} (2009), no. 1, 89--99.

\bibitem{DG10} M. Doumic, P. Gabriel. 
\emph{Eigenelements of a general aggregation-fragmentation model.}
 Math. Models Methods Appl. Sci. \textbf{20} (2010), 757--783.


\bibitem{Edwards} R.E. Edwards.
\emph{Functional Analysis. Theory and Applications.}
Dover Publications, Inc., New York, 1995.

\bibitem{EPW06}  H. Engler, J. Pr\"uss, G. F. Webb.
\emph{Analysis of a model for the dynamics of prions II.}
J. Math. Anal. Appl. \textbf{324} (2006), 98--117.

\bibitem{ELMP} M. Escobedo, Ph. Lauren\c cot, S. Mischler, B. Perthame.
\emph{Gelation and mass conservation in coagulation-fragmentation models.}
J. Differential Equations \textbf{195} (2003), 143--174.

\bibitem{G15} P. Gabriel.
\emph{Global stability for the prion equation with general incidence.}
Math. Biosci. Eng. \textbf{12} (2015), no. 4, 789--801.

\bibitem{GvDWW07} M. L. Greer, P. van den Driessche, L. Wang, G. F. Webb.
\emph{Effects of general incidence and polymer joining on nucleated polymerzation in a model of prion proliferation.}
SIAM J. Appl. Math. \textbf{68} (2007), no. 1, 154--170.

\bibitem{GPW06}  M. L. Greer, L. Pujo-Menjouet, G. F. Webb.
\emph{A mathematical analysis of the dynamics of prion proliferation.}
J. Theoret. Biol. \textbf{242} (2006), 598--606.

\bibitem{Hoan} C.-H. L\^{e}. 
\emph{Etude de la classe des op\'{e}rateurs m-accr\'{e}tifs de $L^1(\Omega)$ et  accr\'{e}tifs dans $L^{\infty}(\Omega)$. Th\`{e}se de 3-\`{e}me cycle.}
Universit\'{e} de Paris VI, Paris, 1977.

\bibitem{L00} Ph. Lauren\c cot.
\emph{On a class of continuous coagulation-fragmentation
equations.}
J. Differential Equations \textbf{167} (2000), 245--274.

\bibitem{L} Ph. Lauren\c cot.
\emph{The Lifshitz-Slyozov equation with encounters.}
Math. Models Methods Appl. Sci. \textbf{11} (2001) 731--748.

\bibitem{LW07}  Ph. Lauren\c cot, Ch. Walker.
\emph{Well-posedness for a model of prion proliferation dynamics.}
J. Evol. Equ. \textbf{7} (2007), 241--264.


\bibitem{LW2}
E. Leis, Ch. Walker.
\emph{Uniqueness of weak solutions to a prion equation with polymer joining.}
Preprint (2016).


\bibitem{Masel} J. Masel, V. A. A. Jansen, M. S. Nowak.  
\emph{Quantifying the kinetic parameters of prion replication.}
Biophys. Chem. \textbf{77} (1999), 139--152.

\bibitem{Nowak} M. A. Nowak, D. C. Krakauer, A. Klug, R. M. May.
\emph{Prion infection dynamics.}
Integr. Biol. \textbf{1} (1998), 3--15.

\bibitem{Pazy} A. Pazy. \emph{Semigroups of Linear Operatos and Applications to Partial Dif\-fe\-ren\-ti\-al Equations.} Springer, Berlin, New York, Heidelberg, 1983.


\bibitem{PP-MWZ06}  J. Pr\"uss, L. Pujo-Menjouet, G. F. Webb, R. Zacher.
\emph{Analysis of a model for the dynamics of prions.}
Discrete Contin. Din. Syst. Ser. B \textbf{6} (2006), 225--235.


\bibitem{SW06} G. Simonett, Ch. Walker.
\emph{On the solvability of a mathematical model for prion proliferation.}
J. Math.Anal. Appl. \textbf{324} (2006), 580--603.

\bibitem{Vrabie} I. I. Vrabie.
\emph{Compactness Methods for Nonlinear Evolutions, second ed.}
Longman Scientific \& Technical, Harlow; copublished in the United States with John Wiley \& Sons, Inc., New York, 1995.

\bibitem{W} Ch. Walker.
\emph{Prion proliferation with unbounded polymerization rates.} Pro\-cee\-dings of the Sixth Mississippi State-UBA Conference on Differential Equations and Computational Simulations, 387--397,
Electron. J. Differ. Equ. Conf., 15, Southwest Texas State Univ., San Marcos, TX, 2007. 


\end{thebibliography}
\end{document}